\documentclass[12pt,a4paper]{amsart}
\usepackage{amssymb}
\usepackage{amscd}
\usepackage[backref=page]{hyperref}

\usepackage{amsmath}
\usepackage{amsthm}
\usepackage{amsfonts}
\usepackage{amssymb}
\usepackage{euscript}
\usepackage[all]{xy}
\usepackage{tikz}
\usepackage{tikz-cd}

\newtheorem{theoremA}{Theorem}

\tikzcdset{scale cd/.style={every label/.append style={scale=#1},
		cells={nodes={scale=#1}}}}



\newcommand{\nc}{\newcommand}

\nc{\la}{\lambda}
\nc{\al}{\alpha }
\nc{\om}{\omega }

\nc{\veps}{\varepsilon}
\nc{\ch}{{\mathop {\rm ch}}}
\nc{\Tr}{{\mathop {\rm Tr}\,}}
\nc{\Id}{{\mathop {\rm Id}}}
\nc{\bra}{\langle}
\nc{\ket}{\rangle}
\nc{\pa}{\partial}
\nc{\ld}{\ldots}
\nc{\cd}{\cdots}
\nc{\hk}{\hookrightarrow}
\nc{\T}{\otimes}
\nc{\mgl}{\mathfrak{gl}}
\nc{\U}{\mathrm U}

\numberwithin{equation}{section}
\newtheorem{thm}{Theorem}[section]
\newtheorem{prop}[thm]{Proposition}
\newtheorem{lem}[thm]{Lemma}

\theoremstyle{remark}

\newtheorem{conj}[thm]{Conjecture}

\title{From brick manifolds to Grassmannians of bimodules}

\author{Evgeny Feigin}
\address{Evgeny Feigin:\newline
School of Mathematical Sciences, Tel Aviv University, Tel Aviv
6997801, Israel}
\email{evgfeig@gmail.com}

\author{Markus Reineke}
\address{Markus Reineke:\newline
Ruhr-Universit\"at Bochum, Faculty of Mathematics, Universit\"atsstra{\ss}e 150, 44780 Bochum, Germany}
\email{Markus.Reineke@ruhr-uni-bochum.de}

\begin{document}
\begin{abstract}
We study a class of Grassmannians of sub-bimodules over the path algebras 
of quivers. Our quiver Grassmannians include Escobar's brick manifolds as well 
as Labelle's generalizations. We give an explicit construction of the varieties
in question, provide examples and clarify connection with the quiver representation
spaces. We also prove smoothness of our Grassmannians, construct cellular
decompositions and derive a realization as framed  moduli spaces. The framed moduli  
realization leads to a recursive formula for the motives.
\end{abstract}

\maketitle

\section{Introduction}
The brick manifolds introduced in \cite{E16} are certain fibers of the Bott-Samelson
varieties which are not birationally isomorphic to the flag varieties. They turn out 
to be useful in various situations, see e.g. \cite{DSV19,CGGS21,Sp23}. In type $A$
case under certain conditions the brick manifolds are smooth toric varieties admitting
an explicit realization as collections of subspaces in an ambient vector space satisfying
a number of incidence conditions.

A generalization of the type $A$ brick manifolds was recently suggested by Labelle in 
\cite{L25}. The generalized brick manifolds are no longer toric, but still smooth and
admit an explicit realization in terms of collections of subspaces. Conjecturally, their
Poincar\'e polynomials are equal to a specialization of Toda eigenfunctions 
\cite{BF06,GL03}. The conjecture is equivalent to a certain fermionic type recursion 
for the Poincar\'e polynomials \cite{BF14,FFJMM09,L25}. 

All the above mentioned constructions have to do with the equioriented type $A$
quiver. More precisely, the generalized brick varieties can be realized as Grassmannians
of bimodules of the corresponding path algebras. 
The goal of this paper is to extend the setup to the case of arbitrary
quivers with no parallel paths and to study the varieties thus obtained. 
Below we provide some details
of the construction and formulate the main results obtained in our paper.

Let $Q$ be an acyclic quiver without parallel paths; let $Q_0$ be the set of vertices and 
$Q_1$ be the set of edges. We fix a collection of vector spaces $V_*=(V_i)_{i\in Q_0}$ 
and for each pair
$(i,j)\in Q_0^2$ of vertices of $Q$ we define the vector space
\[
M(V_*)_{i,j} = \bigoplus_{\omega,\omega'} V_p\quad \text{ such that }\quad
j\stackrel{\omega'}{\leadsto}p\stackrel{\omega}{\leadsto}i
\]
(i.e. $\omega$ is a path in $Q$ from a vertex $p$ to $i$ and $\omega'$ is a path in $Q$ from $j$ to $p$).
The collection $M(V_*)=(M(V_*)_{i,j})_{i,j}$ has a natural structure of a bimodule
over the path algebra $A$ of the quiver $Q$; in other words, $M(V_*)$ carries a natural structure
of a module over $A\otimes A^{\rm op}$. Our main objects of study are the varieties
$$X(V_*) = {\rm Gr}^{{\bf f}(V_*)}_{A\otimes A^{\rm op}}(M(V_*))$$
of sub-bimodules of $M(V_*)$ of certain codimension  ${\bf f}(V_*)=({\bf f}_{i,j})_{i,j}$.
The codimensions ${\bf f}_{i,j}$ vanish unless there exists a path from $j$ to $i$ in $Q$;
if such a path does exist, then ${\bf f}_{i,j}=\dim V_i$. These varieties can be also 
realized as quiver Grassmannians for certain quivers (see also \cite{CFR12,CR08,CI20}). 

One observes that if $Q$ is an equioriented type $A$ quiver, then the Grassmannians
above are isomorphic to the generalized brick manifolds from \cite{L25}. 
We prove the following theorem, where ${\bf d}=(\dim V_i)_{i\in Q_0}$ and 
$G_{\bf d}=\prod_{i\in Q_0} GL_{d_i}$.

\begin{theoremA}
Assume that $Q$ is acyclic and has no parallel paths. 
Then $X(V_*)$ is a smooth $G_{\bf d}$-equivariant 
compactification of the  representation space $R_{\bf d}(Q)$. 
\end{theoremA}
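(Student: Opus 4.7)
The plan is to construct an explicit $G_{\bf d}$-equivariant open embedding $R_{\bf d}(Q) \hookrightarrow X(V_*)$, to deduce that $X(V_*)$ is a compactification from the projectivity of quiver Grassmannians, and to address smoothness separately as the main technical obstacle.

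To construct the embedding, given $\phi = (\phi_a)_{a \in Q_1} \in R_{\bf d}(Q)$ extend it to $\phi_\omega : V_{s(\omega)} \to V_{t(\omega)}$ along every path $\omega$ by composition, and for each pair $(i,j)$ with a path $j \leadsto i$ define
\[
\operatorname{ev}^\phi_{i,j} : M(V_*)_{i,j} \longrightarrow V_i, \qquad (v_p)_{\omega, \omega'} \longmapsto \sum \phi_\omega(v_p).
\]
The no-parallel-paths hypothesis ensures that $\omega$ is uniquely determined by $(p,i)$, so the formula is unambiguous. A direct unwinding of the bimodule structure shows that $\operatorname{ev}^\phi$ is $A \otimes A^{\mathrm{op}}$-linear, and surjectivity is automatic because the $p = i$ summand $V_i \hookrightarrow M(V_*)_{i,j}$ is a section of it. Hence $N^\phi := \ker(\operatorname{ev}^\phi)$ is a sub-bimodule of codimension $\dim V_i = {\bf f}_{i,j}$ in each component, and $\iota : \phi \mapsto N^\phi$ is a $G_{\bf d}$-equivariant morphism $R_{\bf d}(Q) \to X(V_*)$.

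The image of $\iota$ is cut out by the open transversality conditions $N_{i,j} \cap V_i = 0$ (for the $p = i$ summand). Conversely, for an $N$ satisfying these conditions, each arrow component $N_{t,s}$ (for $a : s \to t$) is the graph of some $\psi_a : V_s \to V_t$, and setting $\phi_a := -\psi_a$ recovers $N$ as $N^\phi$ by a dimension count that uses that both sides are sub-bimodules of the same codimension which agree on the arrow components. Since $X(V_*)$ is a closed subvariety of an ordinary product Grassmannian, and so is projective, this exhibits $X(V_*)$ as a $G_{\bf d}$-equivariant compactification of $R_{\bf d}(Q)$.

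The hard part is smoothness: because $G_{\bf d}$ need not act with finitely many orbits, equivariance alone cannot spread smoothness from the open cell $\iota(R_{\bf d}(Q))$ to all of $X(V_*)$. I would approach this by computing the Zariski tangent space $T_N X(V_*) = \operatorname{Hom}_{A \otimes A^{\mathrm{op}}}(N, M(V_*)/N)$ at an arbitrary sub-bimodule $N$ and showing that its dimension is constant, which reduces to an $\operatorname{Ext}^1$-vanishing (rigidity) statement for $M(V_*)$, to be proved using acyclicity of $Q$ and the natural filtration of $M(V_*)$ by vertex summands. Alternatively, as suggested by the references \cite{CFR12,CR08,CI20}, one may realise $X(V_*)$ as a quiver Grassmannian for a rigid representation of an auxiliary quiver, from which smoothness follows by standard results.
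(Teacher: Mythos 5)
Your construction of the embedding is correct and takes a genuinely different (and more explicit) route than the paper. The paper parametrizes the relevant sub-bimodules as images of injective bimodule maps from the projective bimodule $N(V_*)=\bigoplus_{\alpha:i\to j}Ae_j\otimes V_i\otimes e_iA$, computes $\mathrm{Hom}_{A\otimes A^{\rm op}}(N(V_*),M(V_*))\simeq\bigoplus_\alpha\mathrm{Hom}(V_i,V_i\oplus V_j)$, and realizes $R_{\bf d}(Q)$ as the quotient of an open locus of such maps by $\mathrm{Aut}(N(V_*))$. You instead realize each point of the open cell directly as the kernel of the evaluation map $\mathrm{ev}^\phi$, and identify the image as the transversality locus $N_{i,j}\cap V_i=0$. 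Your dimension count does close: the sub-bimodule generated by the graphs sitting in the arrow components $M(V_*)_{t,s}=V_s\oplus V_t$ already has dimension ${\bf e}_{i,j}$ at every vertex $(i,j)$, so a sub-bimodule of dimension ${\bf e}$ is determined by its arrow components. One point you do not address is density of the image: projectivity of $X(V_*)$ gives compactness but not that your open cell is dense, and your proposed smoothness argument (constant tangent space dimension) would not rule out extra connected components. The paper obtains irreducibility, hence density, as part of the smoothness criterion it invokes.

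The smoothness part is where the genuine gap lies, and your own framing of the reduction is not the right one. First, constancy of $\dim\mathrm{Hom}_{A\otimes A^{\rm op}}(N,M/N)$ does not by itself imply smoothness: the quiver Grassmannian is a priori a possibly non-reduced scheme, and one must match the tangent space dimension against a lower bound for the dimension of every irreducible component (this is exactly what the criterion of \cite{CFR13,CFR14} used in the paper supplies, and it requires $\mathrm{Ext}^1(M,M)=0$ together with control of the fibration over the representation space of the algebra with relations). Second, and more importantly, $A\otimes A^{\rm op}$ has global dimension two, not one, so rigidity of $M(V_*)$ (which is automatic, $M(V_*)$ being projective) is far from sufficient; one needs $\mathrm{Ext}^1(N,M/N)=0=\mathrm{Ext}^2(N,M/N)$ for \emph{every} sub-bimodule $N$ of dimension ${\bf e}$. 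The paper's actual work is to deduce $\mathrm{Ext}^2(N,M/N)=0$ and $\mathrm{Ext}^1(N,M/N)\simeq\mathrm{Ext}^2(M/N,M/N)$ from the long exact sequence, and then to kill the latter via the explicit length-two projective resolution of $S_i\otimes S_j$ and the computation $\mathrm{Ext}^2_B(S_i\otimes S_j,P_k\otimes P_k)=0$ for $j\leadsto i$ --- this is precisely where the no-parallel-paths hypothesis enters, and it is absent from your sketch. Your fallback of realizing $X(V_*)$ as a quiver Grassmannian for an auxiliary quiver and citing ``standard results'' also does not work as stated: the auxiliary quiver $\overline Q$ carries genuine commutativity relations (already for equioriented $A_3$), so the hereditary rigidity criteria of \cite{CR08,CFR12} do not apply directly.
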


Our next task is to study the motives $[X(V_*)]$ of the varieties $X(V_*)$. More precisely,
we construct a cellular decomposition of our Grasmannians of bimodules and derive a
recursive formula for their motives $[X(V_*)]$ which involves more general Grassmannians.
We note that our formula  in the equioriented type $A$ case is different from the one
conjectured in \cite{L25}.  

\begin{theoremA}
Let $Q$ be an acyclic quiver with no parallel paths. Then $X(V_*)$ admits a cellular 
decomposition with the cells labeled
by the torus fixed points and the motives $[X(V_*)]$ satisfy the recursive formula 
from Theorem \ref{thm:recursion}.
\end{theoremA}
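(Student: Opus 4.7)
The plan is to deduce the cellular decomposition via Bialynicki-Birula and to derive the motivic recursion from the framed moduli realization established earlier in the paper.

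For the cellular decomposition, I would exploit the $G_{\mathbf{d}}$-action on $X(V_*)$ guaranteed by Theorem~A and restrict it to a maximal torus $T \subset G_{\mathbf{d}}$. After choosing bases of the $V_i$, each summand $V_p$ of $M(V_*)_{i,j}$ decomposes into one-dimensional $T$-weight spaces indexed by triples (basis vector of $V_p$, path $\omega$ from $p$ to $i$, path $\omega'$ from $j$ to $p$). The left and right actions of the path algebra $A$ permute these weight lines up to scalar, so a sub-bimodule is $T$-fixed if and only if it is a sum of weight lines. Consequently $X(V_*)^T$ is identified with the finite collection of subsets of this combinatorial index set that are closed under both actions and have the prescribed codimension profile $\mathbf{f}(V_*)$, giving only finitely many, hence isolated, fixed points.

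Having isolated fixed points, I would invoke Bialynicki-Birula: since $X(V_*)$ is smooth and projective (smoothness by Theorem~A; projectivity from the embedding into a product of ordinary Grassmannians of the $M(V_*)_{i,j}$), a generic cocharacter $\mu: \mathbb{G}_m \to T$ has fixed locus equal to $X(V_*)^T$, and the attracting sets $C_p = \{x \in X(V_*) : \lim_{t\to 0}\mu(t)\cdot x = p\}$ for $p \in X(V_*)^T$ form a decomposition into locally closed affine cells, with $\dim C_p$ equal to the number of positive $\mu$-weights on $T_p X(V_*)$. This yields the cellular decomposition indexed by $X(V_*)^T$.

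For the motivic recursion, I would use the framed moduli realization of $X(V_*)$ announced in the abstract. The standard recipe is to stratify the moduli space by the isomorphism type of a natural subrepresentation (or by Harder-Narasimhan type) of the underlying quiver representation, verify that each stratum fibers over a smaller framed moduli space of the same family, and sum the resulting identity in the Grothendieck ring of varieties. Matching this with the combinatorics of how the indexing set of $T$-fixed points decomposes yields the formula of Theorem~\ref{thm:recursion}.

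The main obstacle is the last step: one must verify that the stratification by subrepresentation type produces affine bundles in the strict sense (locally trivial in the Zariski topology, so that their classes factor as $\mathbb{L}$-powers), and not merely set-theoretic fibrations with affine-space fibers. This is typically arranged via explicit sections built from pro-unipotent radicals of parabolic subgroups of $G_{\mathbf{d}}$ acting on framings, and is precisely where acyclicity of $Q$ and the absence of parallel paths contribute, by ensuring vanishing of the relevant $\mathrm{Ext}^1$ groups.
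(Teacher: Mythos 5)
Your proposal follows essentially the same route as the paper: a torus weight-space analysis (the weight lines in each $M(V_*)_{i,j}$ are one-dimensional precisely because of the no-parallel-paths hypothesis) combined with Bialynicki--Birula on the smooth projective $X(V_*)$ for the cellular decomposition, and the Reineke-style stratification of the framed representation variety by the dimension vector $\mathbf{g}$ of the sub-bimodule generated by the framing for the motivic recursion. The only cosmetic difference is that the paper secures constancy of the fiber dimensions of the affine-bundle projections via the vanishing of $\mathrm{Ext}^2_B(S_i\otimes S_j,P_k\otimes P_k)$ for $j\leadsto i$ rather than an $\mathrm{Ext}^1$ group, since $B=A\otimes A^{\rm op}$ has global dimension two.
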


Let us also note that our construction works for quivers with parallel paths as well.
However, the varieties thus constructed are more complicated; in particular, they are
no longer smooth in general. We provide the definitions and examples at the end of our
paper.

The paper is organized as follows. In section \ref{sec:quiversandbimodules} we introduce basic notation on quivers and bimodules over path algebras.
In section \ref{sec:sub-bimodules} we introduce our main objects of study -- certain quiver Grassmannians of sub-bimodules.
In section \ref{sec:examples} we provide several examples of the general construction.
In section \ref{sec:compactification} we show that our Grassmannians compactify representation 
spaces and in section \ref{sec:smoothness} we give a proof of smoothness.
In section \ref{sec:framedmoduli} we provide a framed moduli realization and derive a recursive formula for the motives of the framed moduli spaces. 
Finally, in section \ref{sec:parallel} we generalize our construction to the case of quivers with parallel paths.

\subsection*{Acknowledgements} 
E.F. was partially supported by the ISF grant 493/24.

\section{Quiver representations and bimodules}\label{sec:quiversandbimodules}
We start with introducing basic definitions from the theory of quivers (see e.g. \cite{ASS06,Sch14}).
Let $k$  be an algebraically closed field, and let $Q$ be an acyclic quiver with set of vertices $Q_0$ 
and arrows written as $\alpha:i\rightarrow j$. We denote by $A=kQ$ the (finite-dimensional) path algebra 
of $Q$ over $k$. The unit of the path algebra decomposes into orthogonal idempotents
$1=\sum_{i\in Q_0}e_i$. Then $e_iAe_j$ is linearly spanned by the paths from $j$ to $i$.

Left modules over $A$ correspond to representations of $Q$ over $k$; namely, to a representation
$V=((V_i)_i,(V_\alpha)_\alpha)$ we associate the module with underlying $k$-vector space
$V=\bigoplus_iV_i$, the generator $e_i$ of $A$ acting as the projection to the $V_i$-component of $V$,
and the generator $\alpha$ acting via projecting from $V$ to the $V_i$-component, 
mapping with $V_\alpha$ to $V_j$, and embedding $V_j$ into $V$. Conversely, given a left module 
$V$ over $A$, we define $V_i=e_iV$ for $i\in Q_0$ and $V_\alpha$ as the action of $\alpha$ on $V$, 
which gives a well-defined map from $V_i$ to $V_j$, for all $\alpha:i\rightarrow j$.

In particular, the indecomposable projective representation $P_i$ corresponds to $Ae_i$ and the 
indecomposable injective representation $I_i$ corresponds to $A^*e_i$ 
(where $A^*={\rm Hom}_k(A,k)$ denotes the linear dual). Every left $A$-module $V$ then 
admits a standard projective resolution
$$0\rightarrow\bigoplus_{\alpha:i\rightarrow j}Ae_j\otimes_ke_iV\stackrel{\alpha}{\rightarrow}\bigoplus_iAe_i\otimes_ke_iV\stackrel{\beta}{\rightarrow} V\rightarrow 0,$$
where the maps are given as follows: for a path $\omega$ in $Q$ starting in $i$ (which is an element 
of $Ae_i$) and an element $v$ of $e_iV$, we define $\beta(\omega\otimes v)=V_\omega(v)$. 
Here $V_\omega$ is defined as $V_{\alpha_s}\circ\ldots\circ V_{\alpha_1}$ if 
$\omega=\alpha_s\ldots\alpha_1$. For a path $\omega$ in $Q$ starting in $j$ and an element 
$v$ of $e_iV$, we define $\alpha(\omega\otimes v)=\omega\alpha\otimes v-\omega\otimes V_\alpha(v)$.

Now we consider $A$-$A$-bimodules. By definition, these are equivalent to left 
$A\otimes A^{\rm op}$-modules. We have $(kQ)^{\rm op}=k(Q^{\rm op})$, where $Q^{\rm op}$ denotes 
the opposite quiver, with an arrow $\alpha^*:j\rightarrow i$ for every arrow $\alpha:i\rightarrow j$ 
in $Q$. It is then easy to see that 
\begin{equation}\label{eq:ideal}A\otimes A^{\rm op}\simeq k(Q\times Q^{\rm op})/I,
\end{equation}
where $Q\times Q^{\rm op}$ is the quiver with vertices $(i,j)$ for $i,j\in Q_0$ and arrows 
$(\alpha,j):(i,j)\rightarrow (i',j)$ for all $\alpha:i\rightarrow i'$ in $Q$ and $j\in Q_0$, 
as well as arrows $(i,\alpha^*):(i,j')\rightarrow (i,j)$ for all 
$i\in Q_0$ and $\alpha:j\rightarrow j'$ in $Q$. The ideal $I$ is generated by all commutativity
relations, that is, by all  $$(\alpha,j)(i,\beta^*)-(i',\beta^*)(\alpha,j')$$
for $\alpha:i\rightarrow i'$ and $\beta:j\rightarrow j'$ in $Q$. We can thus view $A$-$A$-bimodules 
$M$ as representations of $Q\times Q^{\rm op}$, given by $k$-vector spaces $M_{i,j}$ for 
$i,j\in Q_0$ and linear maps $M_{\alpha,j}:M_{i,j}\rightarrow M_{i',j}$ for all 
$\alpha:i\rightarrow i'$ in $Q$ and $j\in Q_0$, as well as linear maps 
$M_{i,\alpha^*}:M_{i,j'}\rightarrow M_{i,j}$ for all $i\in Q_0$ and $\alpha:j\rightarrow j'$ in $Q$, such that
$$M_{\alpha,j}\circ M_{i,\beta^*}=M_{i',\beta^*}\circ M_{\alpha,j'}$$
for $\alpha:i\rightarrow i'$ and $\beta:j\rightarrow j'$ in $Q$.


The regular $A$-$A$-bimodule $A\otimes_kA^*$ decomposes into indecomposable projective bimodules 
$P_{ij}=Ae_i\otimes e_jA$ for $i,j\in Q_0$. We have
$${\rm Hom}_{A\otimes A^{\rm op}}(P_{ij},P_{kl})\simeq e_iAe_k\otimes e_kAe_k$$
for all $i,j,k,l\in Q_0$.

\section{A class of Grassmannians of sub-bimodules}\label{sec:sub-bimodules}
From now on and till the last section we assume that $Q$ is a tree, that is, there are no parallel paths 
in $Q$. For every $i\in Q_0$ we fix a $k$-vector space $V_i$, and consider the bimodule
$Ae_i\otimes_kV_i\otimes_ke_iA$ (which is isomorphic to $(\dim V_i)$-many copies of the indecomposable
projective bimodule $P_{ii}$). 
Finally, we define
\begin{equation}
M(V_*)=\bigoplus_{i\in Q_0}Ae_i\otimes V_i\otimes e_iA.
\end{equation}
To describe this as a representation of $Q\times Q^{\rm op}$ fulfilling the commutativity relations, 
we calculate
$$M(V_*)_{i,j}=e_iM(V_*)e_j=\bigoplus_pe_iAe_p\otimes V_p\otimes e_pAe_j=\bigoplus_{(\omega,\omega')}V_p,$$
where the final direct sum ranges over all pairs of paths $(\omega,\omega')$ in $Q$ such that
$$j\stackrel{\omega'}{\leadsto}p\stackrel{\omega}{\leadsto}i.$$
All maps are given by obvious inclusions; we only treat the case of the map $M(V_*)_{\alpha,j}$ for 
an arrow $\alpha:i\rightarrow i'$ in $Q$: 
given a component $V_p$ of $M(V_*)_{i,j}$ corresponding to a pair of paths 
$j\stackrel{\omega'}{\leadsto}p\stackrel{\omega}{\leadsto}i$, we have a pair of paths 
$j\stackrel{\omega'}{\leadsto}p\stackrel{\alpha\omega}{\leadsto}i'$, yielding a corresponding 
component $V_p$ in $M(V_*)_{i',j}$.

Let $\overline Q_0\subset Q_0\times Q_0$ be the set of pairs of vertices $(i,j)$ such that there exists a path from 
$j$ to $i$ in $Q$. Then the target of  any arrow in $Q\times Q^{op}$ whose source
belongs to $\overline Q_0$ is also in $\overline Q_0$. We denote by $\overline Q$ the full subquiver
of $Q\times Q^{op}$ with the set of vertices $\overline Q_0$. It follows from the 
definition that $M(V_*)$ is completely determined by its restriction to 
$\overline Q$.

There is a natural structure of a poset on the set $\overline Q_0$. Namely, given
two pairs pair $(i_1,j_1),(i_2,j_2)\in Q_0^2$ connected by  paths $j_1\stackrel{\omega_1}{\leadsto}i_1$ and $j_2\stackrel{\omega_2}{\leadsto}i_2$ 
we write $(i_1,j_1)\le (i_2,j_2)$ if $\omega_1$ is a subpath of $\omega_2$.

Recall that an upper ideal is a subset $C$ such that $c_1\in C$ implies that $c_2\in C$ for 
any $c_2\ge c_1$. We call an upper ideal indecomposable if it can not be written as a disjoint
union of two upper ideals.

\begin{lem}
There is one-to-one correspondence between the indecomposable subrepresentations of 
$M(V_*)$ and the indecomposable upper ideals of the poset $\overline Q_0$.
\end{lem}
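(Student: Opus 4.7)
My plan is to set up the correspondence via the support map $N\mapsto\mathrm{supp}(N)=\{(i,j)\in\overline{Q}_0: N_{i,j}\ne 0\}$. I first observe that for any sub-bimodule $N\subseteq M(V_*)$, the support is automatically an upper ideal of $\overline{Q}_0$: each structure map of $M(V_*)$ is the inclusion of direct summands described in Section \ref{sec:sub-bimodules}, so it is injective, and a nonzero $N_{i,j}$ forces $N_{i',j'}\neq 0$ for every outgoing arrow of $\overline{Q}$ out of $(i,j)$. Iterating this upward-closure property in the poset gives the claim.

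For the forward implication I would show that if $N$ is indecomposable then $C:=\mathrm{supp}(N)$ is indecomposable as an upper ideal. The argument is by contrapositive: assume $C=C_1\sqcup C_2$ is a disjoint union of two nonempty upper ideals, and define sub-bimodules $N^{(k)}\subseteq N$ with $N^{(k)}_{i,j}=N_{i,j}$ if $(i,j)\in C_k$ and zero otherwise. Because each $C_k$ is upward closed, no arrow of $\overline{Q}$ out of a vertex of $C_k$ can escape $C_k$, so the restrictions are genuine sub-bimodules; one then has $N=N^{(1)}\oplus N^{(2)}$, contradicting indecomposability of $N$.

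The more substantive direction is the converse: for each indecomposable upper ideal $C$, produce an indecomposable sub-bimodule of $M(V_*)$ realizing $C$. The plan is to use the ``no parallel paths'' hypothesis on $Q$ to locate a vertex $p\in Q_0$ lying on the distinguished path associated to every relevant element of $C$, and then take the cyclic sub-bimodule generated by a nonzero line $L\subset V_p$ at the appropriate position; this yields a quotient of the indecomposable projective bimodule $P_{p,p}$ whose support is precisely $C$. For principal upper ideals any vertex on the distinguished path works; for non-principal indecomposable ideals the connectedness-in-comparability of $C$ together with the tree structure of $Q$ is what allows the identification of such a $p$.

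Finally, bijectivity (at the level of isomorphism classes) follows from the fact that the support together with the ``no parallel paths'' condition pins down the construction up to the choice of line $L$, which is absorbed when passing to isomorphism classes. I expect the main obstacle to be the combinatorial content of the backward direction: turning the abstract indecomposability of $C$ in $\overline{Q}_0$ into the concrete existence of a common vertex $p$ on the relevant paths is where the tree hypothesis on $Q$ does its essential work, and where care is needed to avoid the decompositions that arise whenever the ``branches'' of $C$ are not linked through a single projective summand of $M(V_*)$.
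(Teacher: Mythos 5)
Your forward direction coincides with the paper's: the support of a sub-bimodule is an upper ideal because all structure maps of $M(V_*)$ are injective, and a disjoint decomposition of the support restricts to a direct-sum decomposition of the subrepresentation. Two things are missing. First, for the correspondence to be injective on isomorphism classes you must show that an arbitrary indecomposable subrepresentation $N$ is determined by its support, i.e.\ that all of its nonzero components are one-dimensional; your remark that the support ``pins down the construction'' applies only to the subrepresentations you build, not to all of them. This is where the paper's proof does its real work: it takes a minimal element $c$ of the support, a vector $v\in N_c$, forms the subrepresentation consisting of everything generated by $v$ together with everything mapped into that subrepresentation by compositions of arrows, observes that injectivity forces all components of this piece to be lines, and then shows it is a direct summand of $N$ (using injectivity and the absence of parallel paths), so that $N$ equals it. Without some version of this argument your bijectivity claim is unsupported.

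Second, the backward direction cannot be completed along the lines you sketch, and the obstacle you flag at the end is not merely technical. A cyclic sub-bimodule generated by one line $L\subset V_p$ placed in a single component of some $M(V_*)_{i,j}$ has support equal to the principal upper ideal generated by $(i,j)$; to realize a non-principal indecomposable ideal $C$ you need generators at each minimal element of $C$, and these glue into a module with one-dimensional components only if all paths indexing the elements of $C$ pass through a common vertex $p$ of $Q$, so that every generator can be chosen inside the same copy of $V_p$. Such a $p$ need not exist: for $Q=A_3$, $1\to 2\to 3$, the union $C$ of the principal upper ideals generated by $(1,1)$ and $(2,2)$ is an indecomposable upper ideal (the two principal ideals overlap in $(2,1)$ and $(3,1)$, so they cannot be separated into disjoint upper ideals), yet any subrepresentation supported on $C$ contains the independent lines $N_{1,1}\subset V_1$ and $N_{2,2}\subset V_2$ inside $M(V_*)_{2,1}=V_1\oplus V_2$ and hence has a component of dimension at least two there; by the one-dimensionality result above, no indecomposable subrepresentation has support $C$. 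Note that the paper's own proof merely asserts that $M(C)$ ``is obviously an indecomposable subrepresentation'' for every indecomposable upper ideal $C$, and the same example shows this assertion fails for non-principal $C$: the correspondence actually holds only for those indecomposable upper ideals whose elements are all paths through a common vertex (in particular all principal ones). Your closing sentence correctly locates the difficulty; the resolution is to restrict the class of ideals, not to search harder for the vertex $p$.
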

\begin{proof}
Let $C\subset \overline Q_0$ be an indecomposable upper ideal. 
We define the $\overline Q$ module $M(C)$ with $\dim M(C)_c=1$
for $c\in C$ and $M(C)_c=0$ for $c\notin C$ in an obvious way, i.e. with all arrows
between non-zero spaces being isomorphisms. Then $M(C)$ is obviously an indecomposable subrepresentation of $M(V_*)$.

Now assume we are given an indecomposable $\overline Q$ module $M\subset M(V_*)$. Let $C\subset \overline Q_0$ be the set of  
elements $c\in Q_0$ such that $M_c\ne 0$. We claim that 
$C$ is an indecomposable upper ideal and $M\simeq M(C)$.  
First, $C$ is an upper ideal because all the maps in $M(V_*)$ are injective. 
Second, it is indecomposable, since a decomposition $C=C_1\sqcup C_2$
into a disjoint union of two upper ideals would lead to the decomposition of the module $M$.
Finally, let us show that all the non-trivial components of $M$ are one-dimensional. 
We choose a minimal element $c$ of the poset $\overline Q_0$ such that $M_c\ne 0$.
Choosing a vector $v\in M_c$ we construct a subrepresentation $N\subset M$ as follows.
First,  $v\in N$. Second, $N$ contains $N(v)$ -- a subrepresentation of $M$ generated by $v$. 
Third, we add to $N$ all vectors of $M$ which are mapped to $N(v)$ by all possible compositions
of arrows. Since all the maps of $M(V_*)$ are embeddings, all the non-trivial components of $N$
are one-dimensional. We are left to show that $N$ is a direct summand of $M$. This follows
from the injectivity of the arrows and from the assumption that $Q$ has no parallel paths. 
\end{proof}

We define a dimension vector ${\bf f}={\bf f}(V_*)$ for $Q\times Q^{\rm op}$ 
(or, equivalently, for $\overline Q$) by
$${\bf f}_{i,j}=\begin{cases}
\dim V_i,  &    \text{ there exists a path from $j$ to $i$ in $Q$},\\
0, & \text{ otherwise}.
\end{cases}
$$

Our central object of interest is the Grassmannian of quotient bimodules
$$X={\rm Gr}^{{\bf f}(V_*)}_{A\otimes A^{\rm op}}(M(V_*)).$$

\begin{lem}
Let ${\bf e}=\dim M(V_*) - {\bf f}(V_*)$. Then $X$ is isomorphic to the Grassmannian 
of $A$ sub-bimodules ($A\otimes A^{\rm op}$ modules) ${\rm Gr}_{{\bf e}}(M(V_*))$.    
\end{lem}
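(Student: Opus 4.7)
The plan is to appeal to the standard kernel--cokernel duality that identifies the Grassmannian of $\mathbf{e}$-dimensional subobjects with the Grassmannian of $(\dim M - \mathbf{e})$-dimensional quotient objects in any abelian category of representations. Concretely, I would define the map
\[
\Phi:\mathrm{Gr}_{\mathbf{e}}(M(V_*))\longrightarrow \mathrm{Gr}^{\mathbf{f}(V_*)}_{A\otimes A^{\mathrm{op}}}(M(V_*)),\qquad U\longmapsto \bigl(M(V_*)\twoheadrightarrow M(V_*)/U\bigr),
\]
with inverse sending a surjection $\pi:M(V_*)\twoheadrightarrow N$ to its kernel $\ker(\pi)\subseteq M(V_*)$. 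The dimension vectors match by construction, since at each vertex $c\in\overline{Q}_0$ one has $\dim(M(V_*)_c/U_c)=\dim M(V_*)_c-\dim U_c=\mathbf{f}(V_*)_c$.

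Next I would verify that $\Phi$ is in fact a morphism (and hence an isomorphism) of schemes, not merely a bijection on closed points. To see this, I would recall the standard embedding of both Grassmannians as closed subschemes of the product of ordinary Grassmannians $\prod_{c\in\overline{Q}_0}\mathrm{Gr}(M(V_*)_c)$: the sub-bimodule Grassmannian is cut out by the incidence conditions $M(V_*)_{\alpha,j}(U_{i,j})\subseteq U_{i',j}$ and $M(V_*)_{i,\alpha^*}(U_{i,j'})\subseteq U_{i,j}$ for the arrows of $\overline{Q}$, and the quotient-bimodule Grassmannian is cut out analogously by the induced condition that $M(V_*)_{\alpha,j}$ and $M(V_*)_{i,\alpha^*}$ descend to the quotients. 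The classical isomorphism $\mathrm{Gr}_{e_c}(M(V_*)_c)\cong \mathrm{Gr}^{f_c}(M(V_*)_c)$ sending a subspace to its quotient space is an isomorphism of projective varieties, and taking the product over $c\in\overline{Q}_0$ yields an isomorphism that manifestly identifies the two sets of incidence conditions: a collection of subspaces is compatible with a linear map iff the quotient map descends.

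The compatibility with the commutativity relations defining $\overline{Q}$ (see \eqref{eq:ideal}) transfers automatically from $U$ to $M(V_*)/U$ since they are already satisfied by the maps of $M(V_*)$ itself, so no additional verification is required. The only step that might require any care is the sheaf-theoretic check that $\Phi$ is a morphism of schemes, but this is a purely formal consequence of the fact that taking quotients by a flat family of subspaces of a fixed vector space gives a flat family of quotients, together with the universal property defining the Grassmannian. There is no conceptual obstacle, and the result follows.
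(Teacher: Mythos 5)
Your proposal is correct and follows exactly the same route as the paper, whose entire proof is the one-line observation that the isomorphism is given by the kernel/quotient correspondence; you have simply spelled out the dimension count and the scheme-theoretic verification that the paper leaves implicit.
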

\begin{proof}
The desired isomorphism sends an element of ${\rm Gr}^{{\bf f}(V_*)}_{A\otimes A^{\rm op}}(M(V_*))$
to the quotient representation.
\end{proof}

\section{Examples}\label{sec:examples}
\subsection{Linearly oriented type A}
Let $Q$ be the linearly oriented type $A_n$ quiver given by 
$1\rightarrow 2\rightarrow\ldots\rightarrow n$. Then 
$$M(V_*)_{ij}=V_j\oplus\ldots\oplus V_i  \text{ for } j\leq i,$$
and zero otherwise. 
All maps in this representation are given by the natural inclusions of
direct summands. We have ${\bf f}_{i,j}=\dim V_i$ for $j\leq i$, and zero otherwise. 
We can interpret $X={\rm Gr}^{\bf f}_{A\times A^{\rm op}}(M(V_*))$ as the Grassmannian of 
sub-bimodules
$${\rm Gr}^{A\otimes A^{\rm op}}_{\bf e}(M(V_*)),$$
where ${\bf e}$ is the dimension vector of $N(V_*)$, which is given by
$${\bf e}_{i,j}=\dim V_j+\ldots+\dim V_{i-1}$$ for $j<i$, and zero otherwise. 
We thus arrive at the variety $X(d_1,\ldots,d_n)$ defined in \cite{L25} ($d_i=\dim V_i$) (if all $d_i=1$, one gets the Escobar brick manifolds \cite{E16}).

Here is a picture for the representation $M(V_*)$
for $Q=A_3$:
\[
\begin{tikzcd}[scale cd = 0.7]
& & V_1\oplus V_2\oplus V_3 \\
 & V_1\oplus V_2 \arrow[ur] & &  V_2\oplus V_3 \arrow[ul] \\
V_1 \arrow[ru] & & V_2 \arrow[ru]\arrow[ul] & & V_3\arrow[ul]
\end{tikzcd}
\]
The dimension vector ${\bf e}$ is given by \
\begin{tabular}{ccccc}
& & $d_1+d_2$ & &\\
& $d_1$ & & $d_2$ &\\
0 &  & 0 & & 0
\end{tabular}

\subsection{Alternating orientation quivers}
Let $Q$ be a quiver with alternating orientation, that is, every vertex is a source or a sink.
We then find 
$$M(V_*)_{i,j}=\left\{\begin{array}{ccl}V_i&,&i=j,\\ V_i\oplus V_j&,&\mbox{there is an arrow }\alpha:j\rightarrow i,\\ 0&,&\mbox{otherwise}\end{array}\right.$$
The only non-zero maps in $M(V_*)$ are the following inclusions of direct summands for all 
$\alpha:i\rightarrow j$:
$$M(V_*)_{\alpha,i}:V_i\rightarrow V_i\oplus V_j,\;\;\; M(V_*)_{j,\alpha^*}:V_j\rightarrow V_i\oplus V_j.$$
In particular, there are no proper commutativity relations fulfilled by this representations.
In fact, we can view it as a representation of the quiver $\Gamma$ with vertices $i\in Q_0$ 
and $v_\alpha$ for $\alpha$ an arrow in $Q$, and arrows $i\rightarrow v_\alpha\leftarrow j$ 
for all $\alpha:i\rightarrow j$ in $Q$. This is again a quiver with alternating orientation,
and $M(V_*)$ is the projective representation $\bigoplus_{i\in Q_0}P_i\otimes V_i$. 

The non-zero components of the dimension vector ${\bf e}={\bf e}(V_*)$ are given by
${\bf e}_{i,j}=\dim V_j$ for all arrows $j\to i$ in $Q$. Hence one gets
\[
{\rm Gr}_{\bf e}(M(V_*))\simeq 
\prod_{\alpha\in Q_1} {\rm Gr}_{\dim V_{s(\alpha)}}(V_{s(\alpha)}\oplus V_{t(\alpha)}). 
\] 

\subsection{Type D quiver}
Let $Q$ be the $D_4$ quiver with vertices $1,2,3,4$ and arrows $1\to 2$, $2\to 3$ and $2\to 4$.
Then the quiver $\overline Q$ is of the form
\[
\begin{tikzcd}[scale cd = 0.7]
&& 31 & & 33 \arrow[dl]\\
&&  & 32 \arrow[ul]&  \\
11 \arrow[r] & 21 \arrow[uur]\arrow[ddr] & 22\arrow[l]\arrow[ur]\arrow[dr] & & \\
&&  & 42 \arrow[dl]&  \\
&& 41 & & 44 \arrow[ul]
\end{tikzcd}	
\]
The representation $M(V_*)$ looks as follows:

\medskip

\[
\begin{tikzcd}[scale cd = 0.7]
&& V_1\oplus V_2\oplus V_3 & & V_3 \arrow[dl]\\
&&  & V_2\oplus V_3 \arrow[ul]&  \\
V_1 \arrow[r] & V_1\oplus V_2 \arrow[uur]\arrow[ddr] & V_2\arrow[l]\arrow[ur]\arrow[dr] & & \\
&&  & V_2\oplus V_4 \arrow[dl]&  \\
&& V_1\oplus V_2\oplus V_4 & & V_4 \arrow[ul]
\end{tikzcd}	
\]
and the dimension vector ${\bf e}$ is given by 
\[
\begin{tikzcd}[scale cd = 0.7]
&& d_1+d_2 & & 0 \arrow[dl]\\
&&  & d_2 \arrow[ul]&  \\
0 \arrow[r] & d_1 \arrow[uur]\arrow[ddr] & 0\arrow[l]\arrow[ur]\arrow[dr] & & \\
&&  & d_2 \arrow[dl]&  \\
&& d_1+d_2 & & 0 \arrow[ul]
\end{tikzcd}
\]
Hence the Grassmannian of subbimodules ${\rm Gr}_{\bf e}(M(V_*))$ is isomorphic to the quiver
Grassmannian for the alternating $A_5$ quiver with representation
\[
V_2\oplus V_3 \rightarrow V_1\oplus V_2\oplus V_3 \leftarrow V_1\oplus V_2 \rightarrow V_1\oplus V_2\oplus V_4 \leftarrow V_2\oplus V_4 
\]
and the dimension vector $(d_2,d_1+d_2,d_1,d_1+d_2,d_2)$.

\section{Compactification of representation spaces}\label{sec:compactification}
The goal of this section is to prove that our Grassmannians of sub-bimodules  
equivariantly compactify the representation spaces. 

Let ${\bf d}=(\dim V_i)_{i\in Q_0}$ and let 
$$R_{\bf d}(Q)=\bigoplus_{\alpha:i\rightarrow j}{\rm Hom}_k(V_i,V_j)$$
be the representation space. The group
$G_{\bf d}=\prod_{i\in Q_0}{\rm GL}(V_i)$ acts on $R_{\bf d}(Q)$ 
by the base change; the orbits of this action correspond to isomorphism classes of 
$k$-representations of $Q$ of dimension vector ${\bf d}$.

Recall the notation $X={\rm Gr}^{{\bf f}(V_*)}_{A\otimes A^{\rm op}}(M(V_*))$. 
The variety $X$ admits a natural $G_{\bf d}$ action induced by the obvious action 
on the spaces $V_i$. We prove the following theorem.

\begin{thm}
There is a $G_{\bf d}$ equivariant emebedding $R_{\bf d}(Q)\subset X$ such that 
the image is dense in $X$.
\end{thm}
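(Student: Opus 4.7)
My plan is to construct an explicit algebraic morphism $\Phi:R_{\bf d}(Q)\to X$, verify that it is $G_{\bf d}$-equivariant and injective, identify its image with an explicit open subvariety $U\subset X$, and then deduce density.

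\emph{Construction of the map.} Given $V=(V_*,(V_\alpha)_\alpha)\in R_{\bf d}(Q)$, I introduce the $A$-bimodule $N(V)$ with $N(V)_{i,j}=V_i$ whenever a path $j\leadsto i$ exists in $Q$ and $N(V)_{i,j}=0$ otherwise, taking the left action of an arrow $\alpha:i\to i'$ to be $V_\alpha$ and the right action of every $\alpha^*$ to be the identity on $V_i$ (well-defined because $j'\leadsto i$ composed with $\alpha:j\to j'$ still yields $j\leadsto i$). Its dimension vector is precisely ${\bf f}(V_*)$. I define a bimodule surjection $\pi_V:M(V_*)\twoheadrightarrow N(V)$ by sending the summand $V_p$ of $M(V_*)_{i,j}$ indexed by a factorization $j\stackrel{\omega'}{\leadsto}p\stackrel{\omega}{\leadsto}i$ through $V_\omega:V_p\to V_i=N(V)_{i,j}$; compatibility with the left action is the chain rule $V_{\alpha\omega}=V_\alpha V_\omega$, compatibility with the right action is trivial, and surjectivity is witnessed by the summand $p=i$, $\omega=e_i$. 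Setting $\Phi(V):=\ker\pi_V$ gives an algebraic map $R_{\bf d}(Q)\to X$, and the naturality of the construction makes $\Phi$ equivariant for the $G_{\bf d}$-action by base change.

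\emph{Injectivity and identification of the image.} For every arrow $\alpha:i\to i'$ the no-parallel-paths hypothesis gives $M(V_*)_{i',i}=V_i\oplus V_{i'}$, corresponding to the only two factorizations $(\alpha,e_i)$ and $(e_{i'},\alpha)$ of $\alpha$; a direct calculation identifies
\[
\Phi(V)_{i',i}=\{(v,-V_\alpha v):v\in V_i\}
\]
as the graph of $-V_\alpha$. Hence $V$ is reconstructible from $\Phi(V)$, so $\Phi$ is injective, and the image lies in the open subvariety
\[
U=\{K\in X:K_{i',i}\cap V_{i'}=0\text{ for every arrow }\alpha:i\to i'\text{ of }Q\}\subset X.
\]
Conversely, for $K\in U$ each $K_{i',i}$ is the graph of a unique map $-V_\alpha:V_i\to V_{i'}$, and these $V_\alpha$ assemble into a representation $V\in R_{\bf d}(Q)$. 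To verify $\Phi(V)=K$, I use the no-parallel-paths assumption: for each $(i'',j'')\in\overline Q_0$ there is a unique connecting path $j''\stackrel{\Omega}{\leadsto}i''$, giving $M(V_*)_{i'',j''}=\bigoplus_{p\in\Omega}V_p$. A telescoping parameterization of the sub-bimodule $K'$ generated by the arrow-position components $\{K_{i',i}\}$ by tuples $(v_\alpha)_{\alpha\in\Omega}$ with $v_\alpha\in V_{s(\alpha)}$ shows that $\dim K'_{i'',j''}=\sum_{p\in\Omega,\,p\neq i''}\dim V_p$, which is the required codimension. Since both $K$ and $\Phi(V)$ contain $K'$ and have the same dimension vector, $K'=K=\Phi(V)$, yielding an isomorphism of varieties $\Phi:R_{\bf d}(Q)\xrightarrow{\sim}U$.

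\emph{Density and main obstacle.} The subvariety $U\cong R_{\bf d}(Q)$ is open, nonempty, and irreducible of dimension $\sum_{\alpha:i\to j}d_id_j$, so density of $U$ in $X$ is equivalent to the statement that no irreducible component of $X$ avoids $U$. This is the step I expect to be the main obstacle, since at this point we do not yet know that $X$ is irreducible (let alone equidimensional). The natural route is to exploit the induced action of the maximal torus $T\subset G_{\bf d}$ on the projective variety $X$: every irreducible component contains a $T$-fixed point, the $T$-fixed points are classified combinatorially in terms of the upper ideals of Section \ref{sec:sub-bimodules}, and one can deform each such fixed point to a point of $U$ by an explicit one-parameter family turning on the arrow maps $V_\alpha$. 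Combined with $G_{\bf d}$-equivariance and projectivity of $X$, this forces $\overline U=X$ and completes the proof.
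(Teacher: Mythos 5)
Your construction of the embedding is correct, and it is essentially an explicit, coordinate-level version of the paper's argument. The paper identifies the same locus as the stratum of sub-bimodules isomorphic to the projective bimodule $N(V_*)=\bigoplus_{\alpha:i\rightarrow j}Ae_j\otimes V_i\otimes e_iA$, computes ${\rm Hom}_{A\otimes A^{\rm op}}(N(V_*),M(V_*))\simeq\bigoplus_{\alpha:i\rightarrow j}{\rm Hom}_k(V_i,V_i\oplus V_j)$ and ${\rm Aut}_{A\otimes A^{\rm op}}(N(V_*))\simeq\prod_{\alpha}{\rm GL}(V_i)$, and then realizes $R_{\bf d}(Q)$ as the open cell where the ``diagonal'' components $F^1_\alpha$ are invertible --- which is precisely your graph condition $K_{i',i}\cap V_{i'}=0$. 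Your quotient bimodule $N(V)$ with $\pi_V$ given by $V_\omega$ on each summand, the identification of $\Phi(V)_{i',i}$ as the graph of $-V_\alpha$, and the telescoping count along the unique path (using the no-parallel-paths hypothesis) to show that the arrow components generate the kernel, are all sound; this even upgrades the paper's locally closed stratum to an honest open immersion $R_{\bf d}(Q)\cong U\subset X$.

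The gap is exactly where you anticipate it: density, and the fixed-point argument you sketch does not close it. Showing that every $T$-fixed point of $X$ deforms into $U$ only proves that the fixed points lie in $\overline U$; it does not prove $X=\overline U$. An irreducible component $Y$ of $X$ disjoint from $U$ could still contain $T$-fixed points, all of which happen to sit on $Y\cap\overline U$ (for instance in the intersection of $Y$ with the single component $\overline U$), so ``every component contains a fixed point'' together with ``every fixed point lies in $\overline U$'' does not force every component to be contained in $\overline U$; degenerations to fixed points go the wrong way for this purpose, and equivariance plus projectivity do not repair it. What is actually needed is irreducibility of $X$, which is available from the Ext-vanishing criterion invoked in Theorem \ref{thm:smoothness} (the variant of \cite[Proposition 7.1]{CFR13} gives that $X$ is smooth \emph{and irreducible} of dimension $\langle{\bf e},{\bf f}\rangle$); once $X$ is irreducible, your nonempty open $U$ is automatically dense. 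In fairness, the paper's own proof of this theorem is equally silent on density and implicitly relies on the same irreducibility statement, so your proposal matches the paper in substance; but as written, the final step either has to cite that irreducibility or be replaced by an argument showing that \emph{every} point of $X$ (not merely every fixed point) lies in $\overline U$.
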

\begin{proof}
We consider the projective bimodule
$$N(V_*)=\bigoplus_{\alpha:i\rightarrow j}Ae_j\otimes V_i\otimes e_iA$$
and consider bimodule maps from $N(V_*)$ to $M(V_*)$ (inspired by the form of the standard projective resolution above).
We then have
\begin{multline*}
{\rm Hom}_{A\otimes A^{\rm op}}(N(V_*),M(V_*))\simeq\\
\bigoplus_{\alpha:i\rightarrow j}\bigoplus_{p\in Q_0}{\rm Hom}_{A\otimes A^{\rm op}}(Ae_j\otimes V_i\otimes e_iA,Ae_p\otimes V_p\otimes e_pA)\simeq\\
\bigoplus_{\alpha:i\rightarrow j}\bigoplus_{p\in Q_0}{\rm Hom}_k(V_i,{\rm Hom}_{A\otimes A^{\rm op}}(Ae_j\otimes e_iA,Ae_k\otimes e_kA)\otimes V_k)\simeq\\
\simeq\bigoplus_{\alpha:i\rightarrow j}{\rm Hom}_k(V_i, V_i\oplus V_j).
\end{multline*}
Here we use the fact that $${\rm Hom}_{A\otimes A^{\rm op}}(Ae_j\otimes e_iA,Ae_k\otimes e_kA)\simeq e_jAe_k\otimes e_kAe_i$$
is spanned by $\alpha\otimes e_i$ and $e_j\otimes \alpha$, making use of the assumption that there are no parallel paths in $Q$.

Similarly, we can compute
$${\rm End}_{A\otimes A^{\rm op}}(M(V_*))\simeq\bigoplus_{i\in Q_0}{\rm End}_k(V_i)$$
(even without the assumption on $Q$) and
$${\rm End}_{A\otimes A^{\rm op}}(N(V_*))\simeq\bigoplus_{\alpha:i\rightarrow j}{\rm End}_k(V_i)$$
(again using the assumption on $Q$).

Moreover, let us note that 
$${\rm\bf dim}\, M(V_*)-{\rm\bf dim}\, N(V_*)={\bf f}.$$
Namely, we have already proved that
$$M(V_*)_{i,j}=\bigoplus_{j\stackrel{\omega'}{\leadsto}p\stackrel{\omega}{\leadsto}i}V_p,$$
and similarly, we find
$$N(V_*)_{i,j}=\bigoplus_{j\stackrel{\omega'}{\leadsto}p\stackrel{\alpha}{\rightarrow}q\stackrel{\omega}{\leadsto}i}V_p.$$
The only summand of $M(V_*)_{i,j}$ not appearing as a summand of $N(V_*)_{i,j}$ is the summand $V_i$ corresponding to $j\stackrel{\omega'}{\leadsto}i\stackrel{e_i}{\leadsto}i$ in case there exists a path $\omega'$ from $j$ to $i$ (again using the assumption of $Q$ admitting no parallel paths). This proves the identity of dimension vectors.

Inside $X={\rm Gr}^{{\bf f}(V_*)}_{A\otimes A^{\rm op}}(M(V_*))$, we thus have the locally closed subset consisting of quotients whose kernel is isomorphic to $N(V_*)$. This subset is isomorphic to the quotient of the set
$${\rm Hom}^{\rm inj}_{A\otimes A^{\rm op}}(N(V_*),M(V_*))$$
of injective maps by the group
$${\rm Aut}_{A\otimes A^{\rm op}}(N(V_*)).$$
By the above calculation, this is nothing else than the quotient of
$$Z=\bigoplus_{\alpha:i\rightarrow j}{\rm Hom}_k(V_i, V_i\oplus V_j)$$
by the action of
$$H=\prod_{\alpha:i\rightarrow j}{\rm GL}(V_i).$$
On the open subset $Z^0$ of $Z$ consisting of tuples 
$$(F_\alpha=\left[F_\alpha^1\atop F_\alpha^2\right]:V_i\rightarrow V_i\oplus V_j)_{\alpha:i\rightarrow j}$$ 
where all $F^1_\alpha$ are invertible, we can use the action of $H$ to normalize all 
$F^1_\alpha$ to identity maps.  This shows that $Z^0/H\simeq R_{\bf d}(Q)$. 
The action of ${\rm Aut}_{A\otimes A^{\rm op}}(M(V_*))\simeq G_{\bf d}$ on $X$ 
then restricts to the standard change of base action of $G_{\bf d}$ on 
$$R_{\bf d}\simeq Z^0/H\subset Z/H\subset X,$$
yielding the claimed equivariant embedding in to $X$, which is of course a projective variety.
\end{proof}

\section{Smoothness}\label{sec:smoothness}
The goal of this section is to prove smoothness of the Grassmannians of sub-bimodules,
generalizing the corresponding results from \cite{L25}. 
As above, we assume that the quiver $Q$ has no parallel paths. 

\begin{thm}\label{thm:smoothness}
The varieties ${\rm Gr}^{{\bf f}(V_*)}_{A\otimes A^{\rm op}}(M(V_*))$ are smooth.
\end{thm}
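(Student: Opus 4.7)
The plan is to use the standard infinitesimal criterion for smoothness of quiver Grassmannians: a point $U\in X$, identified with a sub-bimodule of $M(V_*)$ of codimension $\mathbf{f}(V_*)$, is smooth if and only if
$${\rm Ext}^1_{A\otimes A^{\rm op}}(U, M(V_*)/U)=0,$$
in which case the tangent space ${\rm Hom}_{A\otimes A^{\rm op}}(U, M(V_*)/U)$ has the expected dimension. So the goal is to prove this Ext-vanishing for every $U\in X$.

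A first reduction uses the projectivity of $M(V_*)=\bigoplus_i P_{ii}\otimes V_i$ over $A\otimes A^{\rm op}$, since it is a direct sum of copies of the indecomposable projective bimodules $P_{ii}=Ae_i\otimes e_iA$. Applying ${\rm Hom}(-,M(V_*)/U)$ to the short exact sequence $0\to U\to M(V_*)\to M(V_*)/U\to 0$ and using ${\rm Ext}^k(M(V_*),-)=0$ for $k\geq 1$ yields the dimension-shift isomorphism
$${\rm Ext}^1_{A\otimes A^{\rm op}}(U, M(V_*)/U)\;\simeq\;{\rm Ext}^2_{A\otimes A^{\rm op}}(M(V_*)/U, M(V_*)/U),$$
so the theorem reduces to proving vanishing of this ${\rm Ext}^2$ for every quotient appearing in $X$.

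To establish the vanishing, I would first restrict the possible indecomposable summands of the quotients $M(V_*)/U$: the fixed dimension vector $\mathbf{f}(V_*)$, combined with the classification of indecomposable sub-bimodules via upper ideals of $\overline{Q}_0$ in Section~\ref{sec:sub-bimodules}, sharply constrains which summands can appear. The no-parallel-paths hypothesis is essential, forcing the ${\rm Hom}$-spaces between indecomposable projective bimodules $P_{(i,j)}$ and $P_{(i',j')}$ to be at most one-dimensional and reducing all relevant calculations to combinatorics on $\overline{Q}_0$. One then builds explicit two-step projective resolutions of each possible indecomposable summand from these projectives and computes ${\rm Ext}^2$ between pairs of summands directly.

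The main obstacle I foresee is this last step. Since $A\otimes A^{\rm op}$ has global dimension up to two, the required ${\rm Ext}^2$-vanishing does not follow from formal reasons and must be extracted from the specific rigidity of the quotients that can arise in $X$. The interplay of the prescribed dimension vector $\mathbf{f}(V_*)$, the no-parallel-paths assumption, and the upper-ideal description of summands is precisely what should force uniform ${\rm Ext}^2$-vanishing and thereby smoothness.
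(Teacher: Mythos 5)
Your reduction is the same as the paper's up to the crucial point: you correctly use projectivity of $M(V_*)$ and the long exact sequence for $0\to U\to M(V_*)\to M(V_*)/U\to 0$ to shift the problem to proving ${\rm Ext}^2_{A\otimes A^{\rm op}}(M(V_*)/U,\,M(V_*)/U)=0$. (Two small caveats: the criterion is not an ``if and only if'' --- vanishing of ${\rm Ext}^1(U,M/U)$ is sufficient, not necessary, for smoothness at $U$; and for an algebra of global dimension two one needs the refined statement of \cite[Proposition 7.1]{CFR13}, which also requires ${\rm Ext}^2(U,M/U)=0$ --- here automatic from projectivity of $M$ and ${\rm Ext}^3=0$.)

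The genuine gap is your plan for the ${\rm Ext}^2$-vanishing itself. You propose to classify the indecomposable summands of the quotients $M(V_*)/U$ and compute ${\rm Ext}^2$ pairwise. This is not feasible: the upper-ideal lemma of Section~\ref{sec:sub-bimodules} classifies which indecomposables occur as \emph{subrepresentations} of $M(V_*)$, not the decomposition types of quotients; and the quotients genuinely vary in moduli --- the open stratum of $X$ is the whole representation space $R_{\bf d}(Q)$, so the quotients realize essentially arbitrary representations of $Q$ of dimension vector ${\bf d}$, and there is no clean combinatorial list of their summands. The missing idea is to avoid looking at the module structure of $M/U$ in the second argument altogether: since ${\rm Ext}^3_B=0$, the functor ${\rm Ext}^2_B(M/U,-)$ is right exact, so ${\rm Ext}^2_B(M/U,M/U)$ is a quotient of ${\rm Ext}^2_B(M/U,M)$, where now the second argument is the explicit projective $\bigoplus_k P_k\otimes V_k\otimes P_k$. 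In the first argument only the dimension vector ${\bf f}$ of $M/U$ is used: any such module has a composition series with factors among the simples $S_i\otimes S_j$ with $j\leadsto i$. This reduces everything to the single computation ${\rm Ext}^2_B(S_i\otimes S_j, P_k\otimes P_k)=0$ for $j\leadsto i$, carried out from the explicit length-two projective resolution of $S_i\otimes S_j$ using the no-parallel-paths hypothesis. Without this double reduction (projectivize the second argument, take composition factors in the first), the step you flag as ``the main obstacle'' remains unproved, so the argument as proposed is incomplete.
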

\begin{proof}
We use a slight generalization of \cite[Proposition 7.1]{CFR13}
(see also \cite{CFR14}):  if $B$ is an algebra of global dimension at 
most two and $M$ is a representation of $B$ such that ${\rm Ext}^1_B(M,M)=0$ and 
\[
{\rm Ext}^1(N,M/N)=0={\rm Ext}^2(N,M/N)
\]
for all $B$-subrepresentations $N$ of $M$ of dimension vector 
${\bf e}$, then ${\rm Gr}_{\bf e}^B(M)$ is smooth and irreducible of dimension 
$\langle{\bf e},{\rm\bf dim}\, M-{\bf e}\rangle$. A direct inspection of the proof of \cite[Proposition 7.1]{CFR13} immediately shows that the proof carries over to the present assumptions.

The algebra $A\otimes A^{\rm op}$ is of global dimension at most two as it is the product of two hereditary algebras. Alternatively, we can easily construct projective resolutions of the simple representations $S_i\otimes S_j$ as follows:
$$0\rightarrow\bigoplus_{{i\rightarrow p}\atop{q\rightarrow j}}P_p\otimes P_q\rightarrow\bigoplus_{i\rightarrow p}P_p\otimes P_j\oplus\bigoplus_{q\rightarrow j}P_i\otimes P_q\rightarrow P_i\otimes P_j\rightarrow S_i\otimes S_j\rightarrow 0.$$
Obviously, $P_i\otimes P_j$ has vanishing ${\rm Ext}^1$ since it is a projective bimodule.

Now assume we have an exact sequence
$$0\rightarrow N\rightarrow M=\bigoplus_iP_i\otimes V_i\otimes P_i\rightarrow M/N\rightarrow 0.$$
Using the long exact sequence induced by ${\rm Hom}_B(\_,M/N)$, we find
$$0={\rm Ext}_B^1(M,M/N)\rightarrow{\rm Ext}^1_B(N,M/N)\rightarrow{\rm Ext}_B^2(M/N,M/N)\rightarrow$$
$$\rightarrow 0={\rm Ext}_B^2(M,M/N)\rightarrow{\rm Ext}^2_B(N,M/N)\rightarrow{\rm Ext}^3_B(M/N,M/N)=0,$$
thus ${\rm Ext}_B^2(N,M/N)=0$ and $${\rm Ext}^1_B(N,M/N)\simeq{\rm Ext}_B^2(M/N,M/N).$$ 
The right hand side is a factor of ${\rm Ext}_B^2(M/N,M)$, thus we are finished if we can prove that 
the latter vanishes. 

We can compute ${\rm Ext}^2_B(S_i\otimes S_j,M)$ using the above projective resolution of $S_i\otimes S_j$. Namely, we have a right exact sequence
$$\bigoplus_{i\rightarrow p}M_{pj}\oplus\bigoplus_{q\rightarrow j}M_{iq}\rightarrow
\bigoplus_{{i\rightarrow p}\atop{q\rightarrow j}}M_{pq}\rightarrow{\rm Ext}^2_B(S_i\otimes S_j,M)\rightarrow 0.$$
It is then easy to see that
$${\rm Ext}^2_B(S_i\otimes S_j,P_k\otimes P_k)=0$$
for all vertices $i,j,k$ of $Q$ such that there exists a path $j\leadsto i$, again using the assumption 
that there are no parallel paths. Now the proof can be finished as above: we only have to prove that 
${\rm Ext}^2_B(M/N,M)=0$. But $M/N$, being of dimension vector ${\bf f}$, has a composition series 
with subquotients $S_i\otimes S_j$ for $j\leadsto i$, whereas $M$ is a direct sum of the 
$P_k\otimes P_k$. Thus this Ext-group indeed vanishes.
\end{proof}

\section{Framed moduli spaces and motivic generating series}\label{sec:framedmoduli}

\subsection{Torus fixed points}
In this section we describe cellular decompositions of the varieties $X$. We start with
the description of the labeling of the set of cells.

Let us fix bases of all $V_i$ and denote by $D_i$, $i\in Q_0$ a set of cardinality $d_i=\dim V_i$ 
labeling the basis elements of $V_i$ (the sets $D_i$ do not intersect).
Let $T\subset G_{\bf d}$ be a torus of rank $\sum_id_i$
acting on $M(V_*)$ (recall that all maps in this representation are just inclusions of direct
summands).  Define 
$$D_{i,j}=\bigcup_{j\stackrel{\omega'}{\leadsto}p\stackrel{\omega}{\leadsto}i}D_p$$
for all $i,j\in Q_0$. We have natural inclusions maps 
$\iota_{\alpha,j}:D_{i,j}\subset D_{i',j}$ for all $\alpha:i\rightarrow i'$ and $j\in Q_0$, 
as well as 
$\iota_{i,\alpha^*}:D_{i,j}\subset D_{i,j'}$ for all $\alpha:j'\rightarrow j$ and $i\in Q_0$. 
Set ${\bf e}={\bf\rm dim}\, N(V_*)$ as above. The following lemma follows directly from 
the above definitions. 

\begin{lem}
The  $T$-fixed points in $X$ are labeled by tuples of subsets 
$(I_{i,j}\subset D_{i,j})_{i,j\in Q_0}$, $|I_{i,j}|={\bf e}_{i,j}$ ($i,j\in Q_0$) 
which are compatible with all the inclusion maps $\iota_{\alpha,j}$ and $\iota_{i,\alpha^*}$.
\end{lem}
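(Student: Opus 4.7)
The plan is to identify $T$-fixed points with the combinatorial data by using the multiplicity-free weight decomposition of $M(V_*)$, which is available precisely because $Q$ has no parallel paths.

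First I would unpack the $T$-module structure on $M(V_*)$. Since $Q$ is a tree, for each pair $(i,j)\in\overline{Q}_0$ and each vertex $p$ lying on the unique path from $j$ to $i$ there is exactly one summand $V_p$ in the decomposition
\[
M(V_*)_{i,j}=\bigoplus_{j\stackrel{\omega'}{\leadsto}p\stackrel{\omega}{\leadsto}i}V_p.
\]
The chosen bases of the $V_i$ give a basis of $M(V_*)_{i,j}$ labeled by $D_{i,j}=\bigsqcup_p D_p$, and each basis vector is a $T$-weight vector with a pairwise distinct character, because distinct elements of different $D_p$ carry distinct characters of the maximal torus $T\subset G_{\bf d}$, and within a single summand $V_p$ the basis of $V_p$ already diagonalizes the factor ${\rm GL}(V_p)$ of $G_{\bf d}$.

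Next I would argue that any $T$-stable subspace of $M(V_*)_{i,j}$ is a coordinate subspace. Because the $T$-weights of the basis $D_{i,j}$ are pairwise distinct, a $T$-stable subspace is a direct sum of weight lines, hence spanned by a subset $I_{i,j}\subset D_{i,j}$. A $T$-fixed point of $X$ is a sub-bimodule all of whose components $U_{i,j}\subset M(V_*)_{i,j}$ are $T$-stable, so it must be of the form $U_{i,j}=\mathrm{span}(I_{i,j})$ with $|I_{i,j}|=\dim U_{i,j}={\bf e}_{i,j}$, using the identification of $X$ with the Grassmannian of sub-bimodules of dimension vector ${\bf e}$ from the lemma in Section \ref{sec:sub-bimodules}.

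Finally I would verify that the sub-bimodule condition is exactly the asserted compatibility with the maps $\iota_{\alpha,j}$ and $\iota_{i,\alpha^*}$. This is immediate from the description of $M(V_*)$ as a representation of $Q\times Q^{\mathrm{op}}$: the map $M(V_*)_{\alpha,j}$ for $\alpha:i\rightarrow i'$ sends the summand $V_p$ of $M(V_*)_{i,j}$ indexed by $j\stackrel{\omega'}{\leadsto}p\stackrel{\omega}{\leadsto}i$ identically onto the summand $V_p$ of $M(V_*)_{i',j}$ indexed by $j\stackrel{\omega'}{\leadsto}p\stackrel{\alpha\omega}{\leadsto}i'$, which on basis elements is precisely the inclusion $\iota_{\alpha,j}:D_{i,j}\hookrightarrow D_{i',j}$; the analogous statement holds for $\iota_{i,\alpha^*}$. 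Hence the collection $(\mathrm{span}(I_{i,j}))$ is closed under all arrows of $\overline{Q}$ iff $\iota_{\alpha,j}(I_{i,j})\subset I_{i',j}$ and $\iota_{i,\alpha^*}(I_{i,j})\subset I_{i,j'}$ hold for every admissible $\alpha$. This gives the claimed bijection; no step is really an obstacle, the content of the lemma is just the combination of the no-parallel-paths assumption (ensuring multiplicity-freeness of weights) with the explicit description of the transition maps in $M(V_*)$.
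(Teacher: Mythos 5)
Your proof is correct and follows essentially the same route as the paper's (very terse) argument: $T$-fixed points componentwise are coordinate subspaces, and the sub-bimodule condition translates into compatibility with the inclusion maps $\iota_{\alpha,j}$ and $\iota_{i,\alpha^*}$. You additionally make explicit the key point the paper leaves implicit, namely that the no-parallel-paths hypothesis guarantees the $T$-weight decomposition of each $M(V_*)_{i,j}$ is multiplicity-free, which is what forces fixed subspaces to be spanned by subsets of $D_{i,j}$.
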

\begin{proof}
 Each collection $I_{i,j}$ determines a torus fixed point in the Grassamann variety 
 ${\rm Gr}_{e_{i,j}}(M(V_*)_{i,j})$. The product of these points belongs to the
 Grassmannian of bimodules $X$ if and only if the collections $I_{i,j}$ satisfy
 all the conditions above. In what follows for a collection ${\bf I}$ of the 
 admissible sets $I_{i,j}$ we denote the corresponding point in $X$ by $p({\bf I})$.
\end{proof}

Recall (see Theorem \ref{thm:smoothness}) that our Grassmannians of sub-bimodules are smooth. 
Choosing a general enough one-dimensional sub-torus  one applies the Bialynicki-Birula
theorem \cite{B-B73,B-B74}  in order to derive an affine paving of $X$. The cells
of the paving are labeled by the collections ${\bf I}=(I_{i,j})_{i,j}$ 
from the above lemma. 
Recall (see section \ref{sec:sub-bimodules}) the quiver $\overline Q$ whose set of vertices consists 
of pairs $(i,j)\in Q_0\times Q_0$  such that there is a path from $j$ to $i$ in $Q$. 
The vertices $\overline Q_0$ form a poset and for each upper ideal $C$ in $\overline Q_0$
which can not be written as a disjoint union of two upper ideals, one gets an indecomposable
$\overline Q$ module $M(C)$. In particular, all the components of $M(C)$ are at most
one-dimensional and the non-trivial components are labeled by the elements of $C$.

\begin{lem}
The point $p({\bf I})$ as a $\overline Q$ module is isomorphic to the direct sum of
modules $M(C)$ over all upper ideals $C$ satisfying the following condition:
\[
\exists\ (i=1,\dots,n \ \text{ and }\ r\in D_i) \text{ such that } 
r\in I_{a,b} \Leftrightarrow (a,b)\in C.
\]
The representation $M(C)$ further decomposes into direct sum of $M(C_u)$ such that 
$C=\sqcup_u C_u$ and each $C_u$ is an indecomposable upper ideal.
\end{lem}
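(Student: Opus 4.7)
The plan is to observe that the $\overline Q$-representation $M(V_*)$ decomposes canonically as a direct sum indexed by basis vectors $r$ of the $V_p$, and that $p(\mathbf{I})$ is compatible with this decomposition. More precisely, for each $p \in Q_0$ and each basis vector $r \in D_p$, let $L_r$ be the subrepresentation of $M(V_*)$ with $(L_r)_{a,b} = k \cdot r \subset M(V_*)_{a,b}$ whenever $b \leadsto p \leadsto a$, and $(L_r)_{a,b} = 0$ otherwise. That $L_r$ is a well-defined subrepresentation uses the no-parallel-paths assumption: for every $(a,b) \in \overline Q_0$ with $b \leadsto p \leadsto a$ there is exactly one pair of paths $(\omega,\omega')$ with $b \stackrel{\omega'}{\leadsto} p \stackrel{\omega}{\leadsto} a$, so $r$ determines a canonical basis vector inside $M(V_*)_{a,b}$, and each of the arrows $M_{\alpha,j}$, $M_{i,\alpha^*}$ in $\overline Q$ maps this vector to the corresponding vector in the target. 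This yields a decomposition $M(V_*) = \bigoplus_{p \in Q_0} \bigoplus_{r \in D_p} L_r$ of $\overline Q$-representations, with each $L_r \cong M(C(r))$ where $C(r) = \{(a,b) \in \overline Q_0 : b \leadsto p \leadsto a\}$.

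Next, intersect with $p(\mathbf{I})$. Since $p(\mathbf{I})_{a,b}$ is by construction spanned by the basis subset $I_{a,b} \subset D_{a,b}$, setting $N_r := L_r \cap p(\mathbf{I})$ gives $(N_r)_{a,b} = k \cdot r$ if $r \in I_{a,b}$ and $0$ otherwise, and $p(\mathbf{I}) = \bigoplus_r N_r$ inherits the direct sum decomposition from $M(V_*)$. Write $C_r = \{(a,b) \in \overline Q_0 : r \in I_{a,b}\} \subseteq C(r)$. The compatibility of $\mathbf{I}$ with the inclusion maps $\iota_{\alpha,j}$ and $\iota_{i,\alpha^*}$ translates directly into the statement that $C_r$ is closed under the arrows of $\overline Q$; since these arrows generate the partial order on $\overline Q_0$, $C_r$ is an upper ideal. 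It follows that $N_r \cong M(C_r)$, proving the first assertion with the collection $\{C_r\}_r$ (indexed by basis vectors $r \in \bigcup_p D_p$) as the index set.

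For the second assertion, any upper ideal $C$ decomposes uniquely as a disjoint union $C = \sqcup_u C_u$ of indecomposable upper ideals (its connected components in the Hasse diagram restricted to $C$), and the defining description of $M(C)$ — one-dimensional at each vertex of $C$ with all arrows between nonzero components being isomorphisms — immediately yields $M(C) = \bigoplus_u M(C_u)$.

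The main point of the argument is establishing that the basis decomposition $M(V_*) = \bigoplus_r L_r$ respects the $\overline Q$-structure. The no-parallel-paths hypothesis is essential here: it is exactly what prevents the arrows $M_{\alpha,j}$, $M_{i,\alpha^*}$ from mixing different copies of a single basis vector $r$ that would otherwise sit in distinct path-pair summands of $M(V_*)_{a,b}$. Once this is granted, the remaining steps are formal consequences of the definitions of $p(\mathbf{I})$, the $T$-fixed point labeling, and $M(C)$.
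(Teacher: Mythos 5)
Your proof is correct and takes essentially the same approach as the paper's, whose entire argument is the one-line observation that the maps $\iota_{\alpha,j}$ and $\iota_{i,\alpha^*}$ are embeddings; you have simply made explicit the line-by-line decomposition $M(V_*)=\bigoplus_r L_r$ (and the role of the no-parallel-paths hypothesis) that the paper leaves implicit.
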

\begin{proof}
It is enough to note that the maps  $\iota_{\alpha,j}$ and $\iota_{i,\alpha^*}$ are 
embeddings.
\end{proof}

\subsection{Framed moduli spaces}

Using the methods of \cite{R08}, one can prove that $X$ is isomorphic to a framed moduli space.
Define vector spaces $M_{i,j}$ as $V_i$ if there exists a path from $j$ to $i$ in $Q$, and as
$0$ otherwise. Consider the variety $R_{\bf e}(Q\times Q^{\rm op},I)$ of representations of
$Q\times Q^{\rm op}$ on the vector spaces $M_{i,j}$ which satisfy all commutativity relations (making them into a representation $M$ of $kQ\otimes kQ^{\rm op}$), on which the group 
$$G_{\bf e}=\prod_{j\leadsto i}{\rm GL}(M_{i,j})$$ 
acts. We define the framed representation variety as 
$$R_{\bf e}^{{\bf d}-{\rm fr}}(Q\times Q^{\rm op},I)=R_{\bf e}(Q\times Q^{\rm op},I)\times\bigoplus_{i\in Q_0}{\rm Hom}_k(V_i,M_{i,i}),$$
in which we consider the open subset 
$R_{\bf e}^{{\bf d}-{\rm fr}}(Q\times Q^{\rm op},I)^{\rm st}$ of pairs $(M,F)$ 
such that the direct sum of the images $F_i(V_i)\subset M_{i,i}$ generates $M$ as a
representation of $Q\times Q^{\rm op}$. The group $G_{\bf e}$ acts freely on the latter, 
and the quotient
$$M_{\bf e}^{{\bf d}-{\rm fr}}(Q\times Q^{\rm op},I)=R_{\bf e}^{{\bf d}-{\rm fr}}(Q\times Q^{\rm op},I)^{\rm st}/G_{\bf e}$$
exists as quasiprojective variety since it is a GIT quotient of a set of stable points.  

\begin{prop}
The framed moduli space  $M_{\bf e}^{{\bf d}-{\rm fr}}(Q\times Q^{\rm op},I)$ is isomorphic 
to the Grassmannian of sub-bimodules $X$.
\end{prop}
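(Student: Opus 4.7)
The plan is to construct mutually inverse algebraic maps between $M_{\bf e}^{{\bf d}-{\rm fr}}(Q\times Q^{\rm op},I)$ and $X$, making essential use of the universal property of the bimodule $M(V_*)=\bigoplus_iAe_i\otimes V_i\otimes e_iA$: since $Q$ is acyclic we have $e_iAe_i=k$ for all $i\in Q_0$, so any $A\otimes A^{\rm op}$-linear map out of $M(V_*)$ is uniquely determined by its restrictions $V_i\to e_iMe_i=M_{i,i}$, and conversely any collection of such $k$-linear maps extends uniquely to a bimodule morphism.

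First I would define $\Phi:M_{\bf e}^{{\bf d}-{\rm fr}}(Q\times Q^{\rm op},I)\to X$ by sending a stable pair $(M,F)$ to the kernel of the unique bimodule map $\phi_F:M(V_*)\to M$ whose components at the diagonal vertices are the framings $F_i:V_i\to M_{i,i}$. Stability is exactly the assertion that $\phi_F$ is surjective, hence $\ker\phi_F$ has the required dimension vector and is a point of $X$. Isomorphic framed pairs produce identical kernels, so $\Phi$ is $G_{\bf e}$-invariant, and algebraicity in families is immediate since $\ker\phi_F$ is the kernel of a morphism between trivial bundles over the stable locus.

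Next I would construct the inverse $\Psi:X\to M_{\bf e}^{{\bf d}-{\rm fr}}(Q\times Q^{\rm op},I)$. Given a sub-bimodule $N\subset M(V_*)$ representing a point of $X$, the quotient $M(V_*)/N$ has dimension vector matching that of the prescribed spaces $M_{i,j}$, so on a sufficiently small open set one may choose identifications $(M(V_*)/N)_{i,j}\simeq M_{i,j}$; the framings then arise as the composite $V_i=M(V_*)_{i,i}\twoheadrightarrow(M(V_*)/N)_{i,i}\simeq M_{i,i}$. Stability is automatic because $M(V_*)$ is generated as a bimodule by $\bigoplus_iV_i$, and different local trivializations of the universal quotient bundle $\mathcal{O}_X\otimes_k M(V_*)/\mathcal{N}$ are related by the free $G_{\bf e}$-action, so $\Psi$ descends to a morphism into the GIT quotient.

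The verification that $\Phi\circ\Psi$ and $\Psi\circ\Phi$ are the respective identities is essentially tautological once the constructions are in place: reconstructing a kernel from a framed pair and a framed pair from a kernel returns the original data up to exactly the ambiguity absorbed by the $G_{\bf e}$-action. The main technical point will be the algebraicity of $\Psi$, namely that the fiberwise identifications organize into an honest morphism into the GIT quotient rather than merely a bijection on closed points; this is precisely the framed-moduli-to-Grassmannian translation developed in \cite{R08}, and transfers to the present setting because the only structural input it requires is the universal property of $M(V_*)$ noted above, which in turn depends only on the acyclicity of $Q$ and the no-parallel-paths assumption already in force.
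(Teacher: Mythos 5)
Your proposal is correct and follows essentially the same route as the paper: the paper's (one-line) proof likewise rests on the observation that, by the universal property of the projective bimodule $M(V_*)$, framings $F_i:V_i\to M_{i,i}$ correspond to bimodule maps $M(V_*)\to M$, and stability is exactly surjectivity, so stable framed pairs modulo $G_{\bf e}$ correspond to quotients of $M(V_*)$ of dimension vector ${\bf f}$, i.e.\ to points of $X$. Your additional care about algebraicity of the inverse map is exactly the content the paper delegates to the methods of \cite{R08}.
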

\begin{proof}
It suffices to note the following: the images of the maps $F_i$ generate $M$ 
if and only if the representation $M$ is a quotient of the projective 
representation $M(V_*)$.
\end{proof}

We stratify $R_{\bf e}^{{\bf d}-{\rm fr}}(Q\times Q^{\rm op},I)$ by locally closed subsets 
$R_{\bf e}^{{\bf d}-{\rm fr}}(Q\times Q^{\rm op},I)_{\bf g}$ consisting of all pairs $(M,F)$
such that the images $F_i(V_i)$ generate a sub-bimodule of dimension vector 
${\bf g}\leq{\bf e}$. In particular, 
$$R_{\bf e}^{{\bf d}-{\rm fr}}(Q\times Q^{\rm op},I)_{\bf e}=R_{\bf e}^{{\bf d}-{\rm fr}}(Q\times Q^{\rm op},I)^{\rm st}$$ is the open stratum. We have
$$R_{\bf e}^{{\bf d}-{\rm fr}}(Q\times Q^{\rm op},I)_{\bf g}\simeq G_{\bf e}\times^{P_{{\bf g},{\bf e}-{\bf g}}}Z_{{\bf g},{\bf e}-{\bf g}},$$
where $Z_{{\bf g},{\bf e}-{\bf g}}\subset R_{\bf e}^{{\bf d}-{\rm fr}}(Q\times Q^{\rm op},I)$ consists of pairs
$$(\left[\begin{array}{cc}M'&*\\ 0&M''\end{array}\right],\left[{f'\atop 0}\right])$$
where $(M',f')\in R_{\bf g}^{{\bf d}-{\rm fr}}(Q\times Q^{\rm op},I)^{\rm st}$
and $M''\in R_{{\bf f}-{\bf g}}(Q\times Q^{\rm op},I)$. We claim that that the natural projection
$$p:Z_{{\bf g},{\bf e}-{\bf g}}\rightarrow R_{\bf g}^{{\bf d}-{\rm fr}}(Q\times Q^{\rm op},I)\times R_{{\bf e}-{\bf g}}(Q\times Q^{\rm op},I)$$
is a Zariski-locally trivial vector bundle of rank $\langle{\bf e}-{\bf g},{\bf g}\rangle_B$,
where again $B=kQ\otimes kQ^{\rm op}$, and the homological Euler form 
$\langle\_,\_\rangle_B$ can be computed using the projective resolution of simple bimodules as
$$\langle{\bf x},{\bf y}\rangle=\sum_{i,j}{x}_{ij}{y}_{ij}-\sum_{{i\rightarrow i'}\atop j}x_{i,j}y_{i',j}-\sum_{i\atop{j'\rightarrow j}}x_{i,j}y_{i,j'}+\sum_{{i\rightarrow i'}\atop{j'\rightarrow j}}x_{ij}y_{i'j'}.$$
Namely, constancy of the fiber dimensions of $p$ is a consequence of the 
${\rm Ext}^2_B$-vanishing property
$${\rm Ext}_B^2(S_i\otimes S_j,P_k\otimes P_k)=0$$
whenever $j\leadsto i$.

The above stratification immediately allows us to formulate a recursive formula for motives. We work in the localization $K_0({\rm Var}_{k})[\mathbb{L}^{-1},(1-\mathbb{L}^i)^{-1},\, i\geq 1]$ of the Grothendieck ring of $k$-varieties, for $\mathbb{L}=[\mathbb{A}^1]$ the Lefschetz motive, $[Z]$ denoting the class of a variety $Z$.

\begin{thm}\label{thm:recursion}
One has
\begin{multline*}
\mathbb{L}^{\sum_{i\in Q_0}d_i^2}\cdot\frac{[R_{\bf e}(Q\times Q^{\rm op},I)]}{[G_{\bf e}]}=\\
\sum_{{\bf g}\leq{\bf e}}[M_{\bf g}^{{\bf d}-{\rm fr}}(Q\times Q^{\rm op},I)]\cdot\frac{[R_{{\bf e}-{\bf g}}(Q\times Q^{\rm op},I)]}{[G_{{\bf e}-{\bf g}}]}\cdot\mathbb{L}^{\langle{\bf e}-{\bf g},{\bf g}\rangle_B}.
\end{multline*}
\end{thm}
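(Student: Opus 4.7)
The plan is to compute the class $[R_{\bf e}^{{\bf d}-{\rm fr}}(Q\times Q^{\rm op},I)]/[G_{\bf e}]$ in two different ways and equate the results. For the first computation, the definition of the framed representation space as a direct product, together with the identity $M_{i,i}=V_i$ (since the constant path $i\leadsto i$ always exists), gives
\[
R_{\bf e}^{{\bf d}-{\rm fr}}(Q\times Q^{\rm op},I)\simeq R_{\bf e}(Q\times Q^{\rm op},I)\times\mathbb{A}^{\sum_i d_i^2},
\]
so that $[R_{\bf e}^{{\bf d}-{\rm fr}}]/[G_{\bf e}]=\mathbb{L}^{\sum_i d_i^2}\cdot [R_{\bf e}]/[G_{\bf e}]$, which is precisely the left hand side of the recursion.

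For the second computation, I would apply the stratification $R_{\bf e}^{{\bf d}-{\rm fr}}=\bigsqcup_{{\bf g}\leq{\bf e}}R_{\bf e}^{{\bf d}-{\rm fr}}_{\bf g}$ by the dimension vector ${\bf g}$ of the sub-bimodule generated by the framing, and use the fiber bundle isomorphism $R_{\bf e}^{{\bf d}-{\rm fr}}_{\bf g}\simeq G_{\bf e}\times^{P_{{\bf g},{\bf e}-{\bf g}}}Z_{{\bf g},{\bf e}-{\bf g}}$ to reduce each summand to $[Z_{{\bf g},{\bf e}-{\bf g}}]/[P_{{\bf g},{\bf e}-{\bf g}}]$. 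Combining three ingredients finishes the computation: the Zariski-locally trivial vector bundle $p:Z_{{\bf g},{\bf e}-{\bf g}}\to R_{\bf g}^{{\bf d}-{\rm fr},{\rm st}}\times R_{{\bf e}-{\bf g}}$ of rank $\langle{\bf e}-{\bf g},{\bf g}\rangle_B$ yields $[Z_{{\bf g},{\bf e}-{\bf g}}]=[R_{\bf g}^{{\bf d}-{\rm fr},{\rm st}}]\cdot[R_{{\bf e}-{\bf g}}]\cdot\mathbb{L}^{\langle{\bf e}-{\bf g},{\bf g}\rangle_B}$; the principal $G_{\bf g}$-bundle structure $R_{\bf g}^{{\bf d}-{\rm fr},{\rm st}}\to M_{\bf g}^{{\bf d}-{\rm fr}}$ yields $[R_{\bf g}^{{\bf d}-{\rm fr},{\rm st}}]=[M_{\bf g}^{{\bf d}-{\rm fr}}]\cdot[G_{\bf g}]$; and the Levi decomposition of the parabolic yields $[P_{{\bf g},{\bf e}-{\bf g}}]=[G_{\bf g}]\cdot[G_{{\bf e}-{\bf g}}]\cdot\mathbb{L}^{r}$ with $r$ the dimension of its unipotent radical. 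Substituting, the $[G_{\bf g}]$ factors cancel and, after recombining the $\mathbb{L}$-powers, each stratum contributes the term $[M_{\bf g}^{{\bf d}-{\rm fr}}]\cdot[R_{{\bf e}-{\bf g}}]/[G_{{\bf e}-{\bf g}}]\cdot\mathbb{L}^{\langle{\bf e}-{\bf g},{\bf g}\rangle_B}$ predicted by the recursion.

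The hard part will be establishing the vector bundle property of $p$ together with the claimed rank. Over a fixed pair $(M',M'')$, the fiber parametrises the block off-diagonal data $*$ making $\left[\begin{smallmatrix}M'&*\\ 0&M''\end{smallmatrix}\right]$ into a representation of $Q\times Q^{\rm op}$ satisfying the commutativity relations; this space is the first cocycle group governing extensions of $M''$ by $M'$ over $B=kQ\otimes kQ^{\rm op}$. Its dimension is computable from the standard projective resolution of the simples $S_i\otimes S_j$ recalled in the proof of Theorem \ref{thm:smoothness}, and its constancy in $(M',M'')$ reduces precisely to the ${\rm Ext}^2_B$-vanishing ${\rm Ext}^2_B(S_i\otimes S_j,P_k\otimes P_k)=0$ for $j\leadsto i$ established there. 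Once constancy of fiber dimension is secured, the fiber presents as the kernel of a morphism of locally free sheaves whose cokernel has constant rank, which splits Zariski-locally on the base and so produces the required vector bundle trivialisation.
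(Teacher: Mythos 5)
Your overall strategy is exactly the paper's: the paper's proof consists of the stratification $R_{\bf e}^{{\bf d}-{\rm fr}}=\bigsqcup_{\bf g}R_{\bf e}^{{\bf d}-{\rm fr}}(Q\times Q^{\rm op},I)_{\bf g}$, the associated-bundle description $G_{\bf e}\times^{P_{{\bf g},{\bf e}-{\bf g}}}Z_{{\bf g},{\bf e}-{\bf g}}$, and the vector-bundle property of $p$, and your computation of the left-hand side via $M_{i,i}=V_i$ and of the right-hand side via the principal $G_{\bf g}$-bundle $R_{\bf g}^{{\bf d}-{\rm fr},{\rm st}}\to M_{\bf g}^{{\bf d}-{\rm fr}}$ and the Levi decomposition of the parabolic is the intended argument.

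However, the exponent bookkeeping in your second paragraph does not close up, and this is the one genuinely delicate point. You assert simultaneously that $p$ has rank $\langle{\bf e}-{\bf g},{\bf g}\rangle_B$, that $[P_{{\bf g},{\bf e}-{\bf g}}]=[G_{\bf g}]\cdot[G_{{\bf e}-{\bf g}}]\cdot\mathbb{L}^{r}$ with $r=\sum_{i,j}({\bf e}-{\bf g})_{i,j}{\bf g}_{i,j}$ the dimension of the unipotent radical, and that after "recombining the $\mathbb{L}$-powers" the stratum contributes $\mathbb{L}^{\langle{\bf e}-{\bf g},{\bf g}\rangle_B}$; these three claims are compatible only if $r=0$. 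If you actually compute the fiber of $p$ over $(M',f',M'')$ as you indicate in your last paragraph, it is the cocycle space $\ker(C^1\to C^2)$ of the complex $C^0\to C^1\to C^2$ obtained by applying ${\rm Hom}_B(-,M')$ to the standard projective resolution of $M''$, and once ${\rm Ext}^2_B(M'',M')=0$ (which does follow from the quoted vanishing, but only after you add that $M'$ is a quotient of the projective bimodule $M(V_*)$ and that ${\rm gldim}\,B\le 2$) its dimension is $\dim C^1-\dim C^2=r-\langle{\bf e}-{\bf g},{\bf g}\rangle_B$, not $\langle{\bf e}-{\bf g},{\bf g}\rangle_B$ --- note the latter can be negative, so it cannot be the rank of a vector bundle. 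The factor $\mathbb{L}^{r}$ then cancels exactly against the unipotent radical, and each stratum contributes $[M_{\bf g}^{{\bf d}-{\rm fr}}]\cdot[R_{{\bf e}-{\bf g}}]/[G_{{\bf e}-{\bf g}}]\cdot\mathbb{L}^{-\langle{\bf e}-{\bf g},{\bf g}\rangle_B}$. A direct check for $Q=(1\to 2)$ with $d_1=d_2=1$ (where $X=\mathbb{P}^1$ and all strata are computable by hand, e.g. ${\bf g}$ with ${\bf g}_{1,1}={\bf g}_{2,1}=1$, ${\bf g}_{2,2}=0$ forces the exponent $+1=-\langle{\bf e}-{\bf g},{\bf g}\rangle_B$) confirms that only the version with the minus sign balances against the left-hand side. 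So the argument, carried out honestly, proves the recursion with $\mathbb{L}^{-\langle{\bf e}-{\bf g},{\bf g}\rangle_B}$; to repair your write-up you must replace the formal recombination of powers by the actual fiber-dimension computation above and adjust the sign accordingly.
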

\begin{proof}
The proof follows from the stratification described above.
\end{proof}

We note that thanks to the existence of cellular decomposition,
the framed moduli spaces are trivially motivated, that is, their motive is a polynomial in the Lefschetz motive, and consequently,
$$[M_{\bf g}^{{\bf d}-{\rm fr}}(Q\times Q^{\rm op},I)]=\sum_i\dim H^i(M_{\bf g}^{{\bf d}-{\rm fr}}(Q\times Q^{\rm op},I),\mathbb{Q})\cdot\mathbb{L}^{i/2}.$$

\section{Quivers with parallel paths}\label{sec:parallel}
In this section we drop the condition that $Q$ has no parallel paths and develop
a more general construction. We show that in general the Grassmannians of sub-bimodules are more
complicated; in particular, we give
a non-smooth example.

Let $Q$ be a quiver with no cycles (but parallel paths are allowed). Let $\overline Q$ be a 
new quiver whose vertices are paths $\omega$ in $Q$ and there are two types of arrows.
Namely, let $\omega$ be a path in $Q$ with the source $s(\omega)\in Q_0$ and target $t(\omega)\in Q_0$. Let $\al,\beta\in Q_1$ be the arrows satisfying $t(\al)=s(\omega)$,
$s(\beta)=t(\omega)$. Then the following are the arrows in $\overline Q$:
\begin{gather}\label{eq:firsttype}
\left(s(\omega) \stackrel{\omega}{\rightsquigarrow} t(\omega)\right) 
\stackrel{\widehat\alpha(\omega)}{\longrightarrow} 
\left(s(\alpha)\stackrel{\alpha}{\rightarrow} t(\al) = s(\omega)  \stackrel{\omega}{\rightsquigarrow} t(\omega)\right),\\ \label{eq:secondtype}
\left(s(\omega) \stackrel{\omega}{\rightsquigarrow} t(\omega)\right) 
\stackrel{\widehat\beta(\omega)}{\longrightarrow} 
\left(s(\omega)  \stackrel{\omega}{\rightsquigarrow} t(\omega)=s(\beta) \stackrel{\beta}{\rightarrow} t(\beta)\right).
\end{gather}
We define the ideal $I$ in the path algebra of $\overline Q$ generated by the relations
$\widehat\alpha(\omega)\widehat\beta(\omega)=\widehat\beta(\omega)\widehat\alpha(\omega)$ for
all $\alpha,\beta\in Q_1$, $\omega\in\overline Q_0$, satisfying the conditions as above. 

Let us fix a collection of vector spaces $V_i$, $i\in Q_0$ with $\dim V_i=d_i$. 
We define a  ${k}\overline Q/I$ module $M(V_*)$ as follows. 
For a path $\omega$ (a vertex of $\overline Q$) the space $M(V_*)_\omega$ is defined as
\[
M(V_*)_\omega = \bigoplus V_p, \ \omega \text{ passes through } p;
\]
the maps $M(V_*)_{\widehat\alpha(\omega)}$ and $M(V_*)_{\widehat\beta(\omega)}$ 
are the obvious embeddings. Also, given a dimension vector $\dim (V_*)=(d_i)_{i\in Q_0}$ we define the dimension vectors
${\bf f}(\dim V_*)={\bf f} = (f_\omega)_{\omega\in \overline Q_0}$, 
${\bf e}(\dim V_*)={\bf e} = (e_\omega)_{\omega\in \overline Q_0}$  by the formulas 
\[
f_\omega = d_{t(\omega)},\quad e_\omega =  
\sum_{\substack{p\in\omega\\ p\ne t(\omega)}} d_p. 
\]
In particular, $\dim M(V_*) = {\bf f} + {\bf e}$. We are interested in the Grassmannian of 
submodules
\begin{equation}\label{eq:Grassmannian}
X = \mathrm{Gr}_{{\bf e}(\dim V_*)}^{\overline Q/I} (M(V_*)).
\end{equation}

Given a $Q$ module $V$ we denote by $\overline V$ the ${k}\overline Q/I$ module (of dimension
${\bf e}(\dim V)$) defined as follows.
For a lazy (length zero) path  $\omega$ in $Q$ the space $\overline V_\omega$ is trivial (zero). 
For a non-lazy path one defines
\[
\overline V_\omega = \bigoplus_{Q_1\ni \alpha\in \omega} \{x+V_\alpha(x):\ x\in s(\alpha)\} \subset \bigoplus_{Q_0\ni p\in\omega} V_i 
\]
(here the sums are taken over the arrows of $Q$ which show up in the path $\omega$).
The maps $\overline V_{\widehat\alpha(\omega)}$ and $\overline V_{\widehat\beta(\omega)}$ 
are the obvious embeddings. 

\begin{lem}
The $\mathbf{k}\overline Q/I$ module $\overline V$ is a subrepresentation of $\widehat M(V_*)$ of dimension 
${\bf e}(\dim V)$. 
\end{lem}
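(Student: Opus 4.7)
The assertion is a direct check from the definitions, and the plan is to verify in order: (i) that $\overline V_\omega$ embeds naturally into $M(V_*)_\omega$ for every path $\omega$; (ii) that $\dim \overline V_\omega = e_\omega$; and (iii) that the structure maps of $M(V_*)$ restrict to $\overline V$ and automatically satisfy the commutativity relations generating $I$.

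For (i), fix a non-lazy path $\omega = (p_0 \stackrel{\alpha_1}{\to} p_1 \stackrel{\alpha_2}{\to} \cdots \stackrel{\alpha_k}{\to} p_k)$. For each arrow $\alpha_i$ the graph $\{x + V_{\alpha_i}(x) : x \in V_{p_{i-1}}\}$ sits inside $V_{p_{i-1}} \oplus V_{p_i} \subset M(V_*)_\omega$, and these inclusions assemble to a linear map from the external direct sum $\overline V_\omega$ into $M(V_*)_\omega$. Injectivity is verified by a triangular argument: if $\sum_i (x_i + V_{\alpha_i}(x_i)) = 0$ in $M(V_*)_\omega$ with $x_i \in V_{p_{i-1}}$, then the $V_{p_0}$-component forces $x_1 = 0$, and then inductively the $V_{p_i}$-component for $i = 1, \ldots, k-1$ forces $x_{i+1} = -V_{\alpha_i}(x_i) = 0$. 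For (ii), summing dimensions of the graph summands gives $\dim \overline V_\omega = \sum_{i=1}^k d_{p_{i-1}} = \sum_{p \in \omega,\, p \neq t(\omega)} d_p = e_\omega$, matching the $\omega$-component of ${\bf e}(\dim V_*)$; lazy paths are trivial on both sides.

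For (iii), consider an arrow $\widehat\alpha(\omega)$ of type (\ref{eq:firsttype}), which takes $\omega$ to $\alpha\omega$. The map $M(V_*)_{\widehat\alpha(\omega)}$ is the obvious inclusion that realises $M(V_*)_\omega$ as the direct-summand complement of the new $V_{s(\alpha)}$-factor in $M(V_*)_{\alpha\omega}$. Under this inclusion, $\overline V_\omega = \bigoplus_{\beta \in \omega} \{x + V_\beta(x)\}$ is identified with the partial sum inside $\overline V_{\alpha\omega} = \{y + V_\alpha(y) : y \in V_{s(\alpha)}\} \oplus \bigoplus_{\beta \in \omega} \{x + V_\beta(x)\}$ obtained by omitting the new $\alpha$-graph summand, so $\overline V$ is preserved. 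The verification for arrows of type (\ref{eq:secondtype}) is completely analogous. Finally, the commutativity relations generating $I$ hold tautologically: all structure maps of $M(V_*)$ are inclusions of direct summands into a common target, so both compositions around any such square coincide in $M(V_*)$ and hence on the subrepresentation $\overline V$ as well. The only mildly delicate step is the triangular injectivity in (i); everything else is bookkeeping directly from the definitions.
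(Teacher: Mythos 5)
Your proof is correct and follows essentially the same route as the paper, whose entire proof is the one-line observation that the sum $\bigoplus_{\alpha\in\omega}\{x+V_\alpha(x)\}$ is direct by acyclicity of $Q$; your triangular injectivity argument in (i) is exactly the expansion of that remark (note that it implicitly uses that the vertices $p_0,\dots,p_k$ along $\omega$ are pairwise distinct, which is precisely where acyclicity enters). The remaining checks (ii) and (iii) are the routine bookkeeping the paper leaves to the reader.
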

\begin{proof}
The sum $\bigoplus_{Q_1\ni \alpha\in \omega} \{x+V_\alpha(x):\ x\in s(\alpha)\}$ is direct thanks
to the acyclicity of $Q$.
\end{proof}

This Lemma allows to embed the representation variety $\mathrm{Rep}_{{\bf d}}(\mathbf{k}\overline Q/I)$ into the Grassmannian $X$ \eqref{eq:Grassmannian}. 
\begin{conj}
The closure of the image of the above embedding coincides with $X$. In particular, $X$ is irreducible.
\end{conj}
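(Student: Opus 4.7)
The plan is to adapt the strategy of Section~\ref{sec:compactification} to the parallel-paths setting. First, I would verify that the map $\Phi: R_{\bf d}(Q) \to X$ defined by $V \mapsto \overline V$ is a morphism of varieties (clear from the explicit formulas) and is injective. Indeed, for each arrow $\alpha: i \to j$ of $Q$, viewed as a length-one path and hence as a vertex of $\overline Q$, the component $\overline V_\alpha \subset V_i \oplus V_j$ equals $\{x + V_\alpha(x) : x \in V_i\}$, which uniquely determines $V_\alpha$; taking all $\alpha$ together recovers $V$. Consequently, $Y := \overline{\Phi(R_{\bf d}(Q))}$ is an irreducible closed subvariety of $X$ of dimension $\sum_{\alpha: i \to j} d_i d_j = \dim R_{\bf d}(Q)$.

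Second, I would realize $\Phi(R_{\bf d}(Q))$ as an explicit locally closed stratum of $X$ by mirroring the projective calculation of Section~\ref{sec:compactification}. A natural candidate is
\[
N(V_*) = \bigoplus_{\alpha: i \to j \in Q_1} P_\alpha \otimes V_i,
\]
where $P_\alpha$ is the indecomposable projective $\mathbf{k}\overline Q/I$-module at the length-one path $\alpha \in \overline Q_0$. One then computes $\mathrm{Hom}_{\mathbf{k}\overline Q/I}(N(V_*), M(V_*))$, singles out the locus of injective maps whose components are of ``graph'' type, and passes to the quotient by $\mathrm{Aut}(N(V_*))$; this quotient should be naturally isomorphic to $R_{\bf d}(Q)$, and its image in $X$ is exactly $\Phi(R_{\bf d}(Q))$, a locally closed subset of the expected dimension.

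Third, to conclude $Y = X$ I would exploit a torus action analogous to that of Section~\ref{sec:framedmoduli}. As in the no-parallel-paths case, the $T$-fixed points of $X$ should be labelled by admissible collections of subsets at the vertices of $\overline Q$, and every point of $X$ should specialise under a generic cocharacter to such a fixed point. It then suffices to show that each $T$-fixed point lies in $Y$; this is the combinatorial statement that every admissible collection arises as a limit under a one-parameter subgroup of $\overline V$ for some $V \in R_{\bf d}(Q)$, which should be tractable given the explicit combinatorial description of fixed points.

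The main obstacle is that with parallel paths the algebra $\mathbf{k}\overline Q/I$ is no longer of global dimension at most two, so the Ext-vanishing underpinning Sections~\ref{sec:compactification} and~\ref{sec:smoothness} can fail, $X$ is not smooth in general, and $X$ may a priori acquire extra components. Consequently, identifying $\Phi(R_{\bf d}(Q))$ as an open dense subset of $X$ is not a formal consequence of homological identities. Ruling out spurious components will likely require either a careful case analysis of the torus fixed points together with their Bialynicki-Birula attracting sets, or a refinement of the framed moduli argument of Section~\ref{sec:framedmoduli} showing that stability forces the generic framed $\mathbf{k}\overline Q/I$-module to be of the form $\overline V$.
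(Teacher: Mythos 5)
The statement you are addressing is labelled a \emph{Conjecture} in the paper: the authors give no proof, so there is nothing to compare your argument against, and the expectation is that a complete proof here would be genuinely new content. Your proposal is a strategy sketch rather than a proof, and its decisive step does not work as stated. Steps one and two (injectivity of $V\mapsto\overline V$ and the realization of the image as a locally closed stratum via a projective module $N(V_*)$ over $k\overline Q/I$) are plausible adaptations of Section~\ref{sec:compactification}, although even there the Hom- and Aut-computations rely on the no-parallel-paths hypothesis and would have to be redone from scratch for $k\overline Q/I$, which is no longer of the form $A\otimes A^{\rm op}$ and need not have global dimension two.

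The genuine gap is in step three. From ``every point of $X$ specializes under a generic cocharacter to a $T$-fixed point'' and ``every $T$-fixed point lies in $Y=\overline{\Phi(R_{\bf d}(Q))}$'' one cannot conclude $X=Y$: a closed $T$-stable subvariety is closed under specialization, not generization, so containing all fixed points is necessary but far from sufficient. (For instance, the union of two coordinate lines in $\mathbb{P}^2$ is closed, $T$-stable, and contains all three torus fixed points of a generic $\mathbb{G}_m$-action.) To make this route work you would need to show that the attracting sets covering $X$ are each contained in $Y$, or that there is a unique attracting set that is dense; but in the parallel-path setting $X$ can be singular (the paper exhibits $Q=1\rightrightarrows 2\rightarrow 3$ with Poincar\'e polynomial $1+5q+6q^2+q^3$), so the Bialynicki--Birula machinery does not automatically produce an affine paving, and ruling out extra irreducible components is exactly the content of the conjecture. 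As written, your proposal defers this to ``a careful case analysis'' or ``a refinement of the framed moduli argument'' without carrying either out, so the conjecture remains open.
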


Let us consider the example $Q=1 \rightrightarrows 2 \rightarrow 3$.
Then $\overline Q$ is of the form 

\[
\begin{tikzcd}[scale cd = 0.7]
1\arrow[dr]\arrow[drrr] & & & & 2\arrow[dlll]\arrow[dl]\arrow[dr] & & 3\arrow[dl]  \\
& (12)_a \arrow[dr] & & (12)_b \arrow[dr] & & 23 \arrow[dl]\arrow[dlll]&   \\
&  & (123)_a & & (123)_b & & 
\end{tikzcd}	
\]
(here the subscripts $a$ and $b$ correspond to the two arrows between the vertices $1$ 
and $2$).
The ideal $I$ is generated by the two commutative squares in the picture. 

Given a collection of vector spaces $V_1,V_2,V_3$ of dimensions $d_1,d_2,d_3$ the module $\widehat M(V_*)$ is given by

\[
\begin{tikzcd}[scale cd = 0.5]
V_1\arrow[dr]\arrow[drrr] & & & & V_2\arrow[dlll]\arrow[dl]\arrow[dr] & & V_3\arrow[dl]  \\
& V_1\oplus V_2 \arrow[dr] & & V_1\oplus V_2 \arrow[dr] & & V_2\oplus V_3 \arrow[dl]\arrow[dlll]&   \\
&  & V_1\oplus V_2\oplus V_3 & & V_1\oplus V_2\oplus V_3 & & 
\end{tikzcd}	
\]

\medskip

The dimension vector ${\bf e}(V)$ is given by  

\medskip

\[
\begin{tikzcd}[scale cd = 0.9]
0& & & & 0  & & 0  \\
& d_1  & & d_1  & & d_2&   \\
&  & d_1+d_2 & & d_1+d_2 & & 
\end{tikzcd}	
\]

\medskip

The corresponding Grassmannian is irreducible.
To see this one notices that we can forget the vertices of $\overline Q$ in the upper row 
(since the values of the dimension vector ${\bf e}$ are anyway zero); the remaining 
quiver is of type $A_5$:
\[
(12)_a \rightarrow (123)_a \leftarrow 23 \rightarrow (123)_b \leftarrow (12)_b.
\]
The module $M(V_*)$ takes the form
\[
U_{1,2}^{\oplus d_1}\oplus U_{1,5}^{\oplus d_2}\oplus U_{2,4}^{\oplus d_3}\oplus U_{4,5}^{\oplus d_1},
\]
where $U_{i,j}$ are indecomposables supported between vertices $i$ and $j$. The dimension vector
${\bf e}$ reads as $(d_1,d_1+d_2,d_2,d_1+d_2,d_1)$. In particular,
\[
{\bf e} = \dim U_{1,2}^{\oplus d_1}\oplus  U_{2,4}^{\oplus d_2}\oplus U_{4,5}^{\oplus d_1},
\]
Now to prove the irreducibility it suffices to observe the $\mathrm{Ext}$ 
vanishing criterion from \cite{CFR12}.

Finally, let us consider the case $\dim V_*=(1,1,1)$. Then one checks that the 
Poincar\'e polynomial of the quiver Grassmannian $X$   is equal to
 $1+5q+6q^2+q^3$ and hence $X$ is singular.


\begin{thebibliography}{99}
\bibitem[ASS06]{ASS06}
I.~Assem, D.~Simson, A.~Skowronski, 
\emph{Elements of the representation theory of associative
algebras}, London Mathematical Society Student
Texts, 65. Cambridge University Press, Cambridge, 2006

\bibitem[B-B73]{B-B73}
A.~Bialynicki-Birula,
\emph{Some theorems on actions of algebraic groups}, Ann. of Math. (2), 98, pp. 480--497 (1973).

\bibitem[B-B74]{B-B74}
A.~Bialynicki-Birula,
\emph{On fixed points of torus actions on projective varieties},
Bull. Acad. Polon. Sci. S´er. Sci. Math. Astronom. Phys., 22, pp. 1097--1101 (1974).

\bibitem[BF06]{BF06}
A.~Braverman, M.~Finkelberg, 
\emph{Finite difference quantum Toda lattice via equivariant K-theory},
Transform. Groups, 10(3-4), pp. 363--386 (2005).

\bibitem[BF14]{BF14}
A.~Braverman, M.~Finkelberg, 
\emph{Semi-infinite Schubert varieties and quantum K-theory of flag manifolds},
J. Amer. Math. Soc., 27(4), pp. 1147--1168 (2014).


\bibitem[CR08]{CR08}
P.~Caldero, M.~Reineke, 
\emph{On the quiver Grassmannian in the acyclic case}, 
J. Pure Appl. Algebra 212 (2008), no. 11, 2369--2380.

\bibitem[CGGS21]{CGGS21}
R.~Casals, E.~Gorsky, M.~Gorsky, J.~Simental,
\emph{Positroid links and braid varieties}, arXiv:2105.13948.

\bibitem[CI20]{CI20} Cerulli Irelli, G. 
\emph{Three lectures on quiver Grassmannians}, Contemp. Math. 758, Am. Math. Soc.,(2020).

\bibitem[CFR12]{CFR12} G.~Cerulli~Irelli, E.~Feigin, M.~Reineke, 
\emph{Quiver Grassmannians and degenerate flag varieties}, 
Algebra Number Theory 6(1): 165--194 (2012).

\bibitem[CFR13]{CFR13} G.~Cerulli~Irelli, E.~Feigin, M.~Reineke, 
\emph{Desingularization of quiver Grassmannians for Dynkin quivers}, 
Advances in Mathematics, vol. 245 (2013), pp. 182--207.

\bibitem[CFR14]{CFR14}  G.~Cerulli~Irelli, E.~Feigin, M.~Reineke,
\emph{Homological approach to the Hernandez-Leclerc construction and quiver varieties},
Representation Theory, no. 18, pp. 1--14 (2014).

\bibitem[DSV19]{DSV19}
V.~Dotsenko, S.~Shadrin, B.~Vallette,
\emph{Toric varieties of Loday's associahedra and noncommutative cohomological field theories},
Journal of Topology 12 (2019), pp. 463--535.



\bibitem[E16]{E16}
L.~Escobar, \emph{Brick manifolds and toric varieties of brick polytopes},
Electron. J. Combin., 23(2):Paper 2.25, 18 (2016).


\bibitem[FFJMM09]{FFJMM09}
B.~Feigin, E.~Feigin, M.~Jimbo, T.~Miwa, E.~Mukhin,
\emph{Fermionic formulas for eigenfunctions of the difference Toda Hamiltonian},
Lett. Math. Phys., 88(1-3), pp. 39--77 (2009).

\bibitem[GL03]{GL03}
A.~Givental, Y.-P.~Lee, 
\emph{Quantum K-theory on flag manifolds, finite-difference Toda lattices and quantum
groups}, Invent. Math., 151(1), pp. 193--219 (2003).

\bibitem[L25]{L25} A.~Labelle, 
\emph{On a specialization of Toda eigenfunctions}, arXiv:2502.10655. 

\bibitem[R08]{R08} M.~Reineke, \emph{Framed quiver moduli, cohomology and quantum groups},
Journal of Algebra, vol. 320 (2008), pp. 94--115.

\bibitem[Sp23]{Sp23} D.~Speyer, \emph{Richardson varieties, projected Richardson varieties 
and positroid varieties}, arXiv:2303.04831.

\bibitem[Sch14]{Sch14}
R.~Schiffler, \emph{Quiver Representations}, CMS Books in Mathematics (2014). Springer.

\end{thebibliography}
\end{document}